\documentclass[11pt,reqno]{amsart}
\usepackage{mathptmx}      
\usepackage{amssymb}
\usepackage{amscd}
\usepackage{amsmath}
\usepackage{amsthm}
\usepackage{amsrefs}
\usepackage{enumitem}
\usepackage{scrtime}

\newcommand{\supp}{{\rm supp \;}}
\newcommand{\sign}{{\rm \, sign \;}}
\newcommand{\R}{\mathbb{R}}
\newcommand{\N}{\mathbb{N}}
\newcommand{\Z}{\mathbb{Z}}
\newcommand{\T}{\mathbb{T}}

\newcommand{\meas}{\Omega}
\newcommand{\BM}[3]{\mathcal{M}_{#1}^{#2}\left(2,#3\right)}
\newcommand{\wBM}[3]{\mathcal{M}_{#1}^{#2,\infty}\left(2,#3\right)}

\newcommand{\mul}{{\bf m}}

\newcommand{\dd}{\mathrm{d}}

\newcommand{\norm}[1]{\left\Vert#1\right\Vert}
\newcommand{\brkt}[1]{\left(#1\right)}
\newcommand{\abs}[1]{\left|#1\right|}
\newcommand{\set}[1]{\left\{#1\right\}}
\newcommand{\esc}[1]{\langle{#1}\rangle}

    \newtheorem{thm}{Theorem}[section]
    \newtheorem{cor}[thm]{Corollary}
    \newtheorem{lem}[thm]{Lemma}
    \newtheorem{prop}[thm]{Proposition}
    \newtheorem{defn}[thm]{Definition}
    \newtheorem{rem}[thm]{Observation}

    \numberwithin{equation}{section}

\begin{document}
\date{August 21, 2012}
\title{A Homomorphism Theorem for Bilinear Multipliers}
\keywords{Fourier multipliers, Bilinear multipliers, Homomorphism theorem} 
\subjclass[2000]{42B15,42B35}
\author[S.~Rodr\'iguez-L\'opez]{Salvador Rodr\'iguez-L\'opez}
\address{Department of Mathematics, Uppsala University, Uppsala, SE 75106,  Sweden}
\email{salvador@math.uu.se}
\urladdr{http://www.math.uu.se/~salvador}
\thanks{The author has been partially supported by the Grant MTM2010-14946.}

\maketitle

\begin{abstract}
In this paper we prove an abstract homomorphism theorem for bilinear multipliers in the setting of locally compact Abelian (LCA) groups. We also provide some applications. In particular, we obtain  a bilinear abstract version of K. de Leeuw's theorem for bilinear multipliers of strong  and weak type. 
We also obtain necessary conditions on bilinear multipliers on non-compact LCA groups, yielding boundedness for the corresponding operators on products of rearrangement invariant spaces.  Our investigations extend some existing results in $\R^n$ to the framework of general LCA groups, and  yield new boundedness results for bilinear multipliers in quasi Banach spaces. 
\end{abstract}

\section{Introduction}
The study of multilinear multipliers is motivated by their natural appearance in analysis, such as in the work of R. Coifman and Y. Meyer on singular integral operators and commutators \cite{MR518170}.  The proof of M. Lacey and C. Thiele (see \cites{MR1689336}) on the boundedness of the bilinear Hilbert transform, ignited interest in questions related to multilinear operators, which lead to the study of the validity of multilinear counterparts to classical linear results.   In particular, and of direct relevance to this paper, there has been quite a few studies in establishing multilinear versions of K. de Leeuw's type theorems (see \cite{MR0174937}) on the Lebesgue spaces \cites{MR1808390, MR2301463, MR2471164, MR2169476, MR2037006}. The proofs in the existing literature, rely either on the dilation structure of $\R^n$ or on duality arguments that use the Banach space structure of the target space. 

Roughly speaking, de Leeuw's results state that if $\mul$ is a Fourier multiplier for $L^p(\R^n)$, with $1\leq p\leq \infty$, then if $\pi$ is either the natural injection of $\Z^n$ in $\R^n$ or that of $\R^{d}$ in $\R^n$  for $d<n$, the composition $\mul\circ \pi$ is also a multiplier for $L^p(\T^n)$, respectively for $L^p(\R^d)$, with norm bounded by the norm of $\mul$.  These results were generalised to the context of LCA groups first by S. Saeki \cite{MR0275057}, and later reproved, using transference techniques, by R. Coifman and G. Weiss \cite{MR0481928}. Applying these transference ideas, N. Asmar \cite{MR1001119} and E. Berkson, T.A. Gillespie and P. Muhly \cite{MR1004717} obtained a proof of R. Edwards and G. Gaudry's homomorphism theorem for multipliers \cite[Theorem B.2.1]{MR0618663}, which allows to recover de Leeuw's result as a special case. 

The aim of this work is to obtain,  in the abstract setting of LCA groups, a homomorphism theorem for bilinear multipliers (see Theorem \ref{thm:main theorem} below), which is the bilinear counterpart of  Edwards and Gaudry's. Roughly speaking, we show that if $G$ and $\Gamma$ are two LCA groups, $\mul$ is a bilinear multiplier on $G$ and $\pi$ is a  homomorphism between the dual group of $\Gamma$ and $G$, then the composition $\mul \circ \pi\otimes \pi$ is also a bilinear multiplier on $\Gamma$, with operator norm bounded by the norm of $\mul$.

In contrast to the linear case, interesting  multilinear operators, such as the bilinear Hilbert transform,  or bilinear Calder\'on-Zygmund operators, map Banach Lebesgue spaces to $L^p$ spaces with $0 < p < 1$.  Thus duality is precluded in proving the most general results.  

The two main difficulties to develop the abstract theory are the lack of duality for target spaces and of dilation structure in the general setting. The main achievements of this work are to provide proofs that rely only on the underlying group structure (avoiding dilation arguments), using the bilinear transference techniques developed by L. Grafakos and G. Weiss \cite{MR1398100} (see  also O. Blasco, M. Carro and T. A. Gillespie's work \cite{MR2169476} for a related approach),  and moreover, to develop  a method of approximating bilinear Fourier multipliers between general rearrangement invariant function spaces, in particular Lebesgue spaces, to tackle the technical difficulties of dealing with non-Banach target spaces (Theorem \ref{thm:Main} below). 

As application of our study, we recover several known results and present some new ones. In particular, we obtain an abstract de Leeuw's type theorem (Theorem \ref{thm:DeLeeuw}) that allows us to extend D. Fan and S. Sato's results (see Corollary \ref{cor:Fan_Sato} below) for anisotropic dilations, and to extend G. Diestel and L. Grafakos's \cite{MR2301463}*{Proposition 2} for $p<1$ and for weak type multipliers.  Furthermore, inspired by N. Asmar and E. Hewitt's approach  in the linear setting \cite{MR930884}, we define a Generalised Bilinear Hilbert Transform on certain groups with ordered dual, and obtain an abstract version of Lacey and Thiele's result for it (see Theorem \ref{thm:generalisedBHT} below).  As another application we obtain necessary conditions, in terms of the Boyd indices, on multipliers on non-compact LCA groups to be bounded on products of rearrangement invariant spaces (see Theorem \ref{thm:necessity} below). This is a bilinear counterpart of  the classical result of L. H\"ormander \cite[Theorem 1.1]{MR0121655}. In particular, our result extends L. Grafakos and R. Torres's \cite[Proposition 5]{MR1880324}, L. Grafakos and J. Soria's \cite[Proposition 2.1]{MR2595656} and F. Villarroya's \cite[Proposition 3.1 ]{MR2471164}, to the setting of multipliers on general non-compact LCA groups acting on rearrangement invariant spaces.

The paper is organised as follows: In Section \ref{sect:notation} we introduce the basic notations and state our main results, which we prove in sections \ref{sect:proof_main} and \ref{sect:approx} respectively. 
Applications derived from our main theorems are collected in Section \ref{sect:Application and consequences}. 

It is worth mentioning that the results of this work easily extends to the setting of $m$-linear operators when $m\geq 3$ but, for the sake of simplicity in the exposition, we restrict our discussion  to the bilinear case as it contains the major ideas of this investigation. 

\section{Basic notation and main results}\label{sect:notation}
Here $G$ denotes a locally compact Hausdorff,
$\sigma$-compact, Abelian topological group and we shall abbreviate it to LCA group. We adopt the additive notation for the group inner operation.  We shall denote by $\widehat{G}$ the group of characters and we write $\esc{\xi,x}$ for the value of $\xi\in \widehat{G}$ at $x\in G$, and  $\overline{\esc{\xi,x}}$ for its complex conjugate. We shall use the letters $x,y$ for denoting elements in $G$, and $\xi,\eta,\zeta,\gamma$ for elements in $\widehat{G}$. We reserve the symbol $e_G$ for the identity element of $G$. In order to avoid technical conditions, we will assume that the group $G$ is metrisable which is equivalent for $\widehat{G}$ to be $\sigma$-compact. 

From now on, $L^{1}(G)$ stands for the space of integrable functions on $G$ with respect the Haar measure, and we denote by $L^1_c(G)$ the subspace of compactly supported integrable functions. Let  $\widehat{f}$ be the Fourier transform of a function $f$ defined by
\[
    \widehat{f}(\xi)=\int_G f(u) \overline{\esc{\xi,u}}\, {\dd u}.
\]
We choose the Haar measure in $\widehat{G}$ in such a way that the following Fourier inversion formula holds,
\[
	f(u)=\int_{\widehat{G}} \widehat{f}(\xi) \esc{\xi,u}\, \dd \xi,
\]
for any $f\in SL^1(G)$, which stands for the space of function $f\in L^1(G)$ such that $\widehat{f}\in L^1(\widehat{G})$. We shall denote by $f^\vee$ the inverse Fourier transform defined by $f^\vee(\xi)=\widehat{f}(-\xi)$.  We write $G^2$ for denoting the group $G\times G$ endowed with the product measure. For any functions $f,g$ on $G$ we introduce another function $f\otimes g$ on $G^2$ by setting
$
    f\otimes g(\xi,\eta)=f(\xi) g(\eta).
$

For more information about topological groups and their properties we refer the reader to \cite{MR551496}.

By a quasi-Banach function space (QBFS for short) on a totally $\sigma$-finite measure space $\brkt{\meas,\Sigma, \mu}$,
we denote a complete linear subspace $X$ of the space of $\mu$-measurable functions, $L^0(\meas)$, endowed
with a (quasi-)norm $\norm{\cdot}_X$ with the following properties:
\begin{enumerate}
    \item $f\in X$ if, and only if $\norm{f}_X=\norm{\,\abs f\,}_X<\infty$;
    \item $g \in X$ and $\norm{g}_X \leq \norm{f}_X$, whenever
$g \in L^0(\meas)$, $f \in X$, and $|g| \leq |f |$ $\mu$-a.e;
    \item If $0 \leq f_n\uparrow f$ a.e., then $\norm{f_n}_X \uparrow \norm{f}_X$;     \item $\mu(E)<\infty \Rightarrow ||\chi_E||_{X}<\infty$.
\end{enumerate}
Observe that bounded functions supported in sets of finite measure
belong to every QBFS. If $\norm{\cdot}_X$ is a norm, and for any
finite measure set $E$, there exists a constant $C_E$ such that,
\begin{equation}\label{eq:technic_20}
    \int_E \abs f\leq C_E \norm{f}_X,
\end{equation}
we say that $X$ is a Banach function space (BFS for short). The following {\it Fatou's property} holds:
\begin{lem}\label{lem:Fatou}\cite[Lemma I.1.5]{MR928802} Let $X$ be a QBFS, and, for $n\in \N$, $f_n\in X$.
 If $f_n\to f$ a.e., and if $\liminf_n
    \norm{f_n}_X<\infty$, then $f\in X$ and
    \[
        \norm{f}_X\leq \liminf _n \norm{f_n}_X.
    \]
\end{lem}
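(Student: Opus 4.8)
The plan is to exploit the lattice (ideal) structure of $X$ together with the monotone Fatou-type property (3), rather than any form of the triangle inequality; this is precisely what makes the argument go through verbatim in the quasi-Banach setting, where the quasi-norm need not be subadditive. Writing $L := \liminf_n \norm{f_n}_X < \infty$, I would first reduce to nonnegative functions by passing from $f_n$ to $\abs{f_n}$, which is harmless since $\abs{f_n}\to \abs f$ a.e.\ and $\norm{f_n}_X = \norm{\,\abs{f_n}\,}_X$ by property (1).

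The key construction is to set $g_m := \inf_{n\geq m}\abs{f_n}$. Each $g_m$ is measurable as a countable infimum, and since $\abs{f_n}\to \abs f$ a.e.\ one has $0\leq g_m \uparrow \liminf_n\abs{f_n} = \abs f$ a.e. For a fixed $m$ and every $n\geq m$ we have $0\leq g_m\leq \abs{f_n}$ with $\abs{f_n}\in X$, so the ideal property (2) gives $g_m\in X$ and $\norm{g_m}_X\leq \norm{f_n}_X$. Taking the infimum over $n\geq m$ yields $\norm{g_m}_X\leq \inf_{n\geq m}\norm{f_n}_X\leq L$.

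Finally I would apply property (3) to the increasing sequence $0\leq g_m\uparrow \abs f$, obtaining $\norm{g_m}_X\uparrow \norm{\,\abs f\,}_X$, with the convention that this supremum equals $+\infty$ should $\abs f\notin X$. Combined with the uniform bound $\norm{g_m}_X\leq L$, this forces $\norm{\,\abs f\,}_X\leq L<\infty$, whence $f\in X$ by property (1) and $\norm{f}_X\leq L = \liminf_n\norm{f_n}_X$, as claimed.

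The only genuine subtlety is the correct reading of property (3) when the limit is not known in advance to belong to $X$: one must interpret $\norm{\,\abs f\,}_X$ as a possibly infinite quantity and take (3) to assert monotone convergence of the norms to it. Once that is granted, the finiteness of $L$ is exactly what upgrades the inequality $\norm{\,\abs f\,}_X\leq L$ into genuine membership $f\in X$. I expect this bookkeeping, rather than any analytic difficulty, to be the main point to get right.
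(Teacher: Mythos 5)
Your proof is correct, and it is essentially the standard argument: the paper itself offers no proof of this lemma, citing instead Bennett--Sharpley \cite[Lemma I.1.5]{MR928802}, whose proof is exactly your construction $g_m=\inf_{n\geq m}\abs{f_n}\uparrow\abs f$ combined with the ideal property (2) and the monotone property (3). Your reading of (3) with possibly infinite values is the intended one, since property (1) already presents $\norm{\cdot}_X$ as a functional on all of $L^0$ whose finiteness characterises membership in $X$, and your observation that no triangle inequality is used is precisely why the lemma survives in the quasi-Banach setting.
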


We say that a QBFS (or a BFS) $X$ is rearrangement invariant (RI for
short) if there exists a quasi-norm (respectively a norm)
$\norm{\cdot}_{X^*}$ defined on $L^0\left[0,\mu(\meas)\right)$
endowed with the Lebesgue measure, such that
$\norm{f}_X=\norm{f^*}_{X^*}$. Here $f^*$ stands for the
non-increasing rearrangement of $f$, defined, for $t> 0$, by
\[
       f^*(t)=\inf\set{s:\; \mu_f(s)\leq t},
\]
where $\mu_f(s)=\mu\set{x:\;|f(x)|>s}$ is the distribution function
of $f$. Let  $X$ be a RI QBFS, let
\begin{equation}\label{Dilation}
	E_{1/s}f^*(t)=f^*\brkt{t/s}, \quad s,t>0,
\end{equation} be the dilation operator,
and denote  by $h_X(s)$ its norm. That is,
\begin{equation}\label{Dilation_norm}
h_X(s)=\sup_{f\in X\setminus \set{0}}
    \frac{\norm{E_{\frac{1}{s}}f^*}_{X^*}}{\norm{f^*}_{X^*}},\,
    s>0.
\end{equation} 
Lebesgue $L^p$ spaces, Classical Lorentz spaces and Orlicz spaces are examples of RI QBFSs. We refer the reader to \cite{MR928802} for further information on non-increasing rearrangement, BFS and RI spaces.

It is is easy to see for that any BFS $X$ on $G$ equipped with the Haar measure, such that $\norm{\cdot}_X$ is absolutely continuous (see \cite[Definition 3.1, p. 14]{MR928802}), $SL^1(G)\cap X$ is a dense set in $X$. In particular, $SL^1(G)$ is dense in any $L^p(G)$ for $p<\infty$.  

A QBFS $X$ is the $p$-convexification of a BFS $Y$ if $X$ can be renormed by a quasi-norm $\norm{\cdot}_X$ such that, for any $f\in X$,
\[
	\norm{f}_{X}=\norm{\abs{f}^p}_Y^{1/p}.
\]
In such case, we will assume that the quasi-norm in $X$ is given by $\norm{\abs{.}^p}_Y^{1/p}$. The Lebesgue $L^q$ and the weak Lorentz $L^{q,\infty}$ spaces, for $0<q<1$ and $L^{1,\infty}$ are examples of such spaces. 

A BFS $X$ is $p$-concave (see \cite{MR540367}) if there exists a constant $M<\infty$ so that 
\[
	\brkt{\sum_{j=1}^n \norm{f_j}_X ^p}^{\frac 1 p}\leq M \norm{\brkt{\sum_{j=1}^n \abs{f_j}^p}^{\frac 1 p}}_X,
\]
for every choice $\{f_j\}_{j=1}^n$ in $X$. The least constant $M$ satisfying the inequality is denoted by $M_{(p)}(X) $.  Let us observe that, for any $1\leq p<\infty$, $M_{(p)}(L^p)=1$. 

Throughout the paper, we shall assume that $X_1,X_2$ are RIBFS and $X$ is a RI QBFS on $G$ endowed with the Haar measure. 

\begin{defn}
Let $\mul(\xi, \eta)\in L^\infty(\widehat{G}^2)$. Define
\[
 B_{\mul}(f, g)(x)=\iint_{\widehat{G}^2}\hat{f}(\xi)\hat{g}(\eta)\mul(\xi, \eta)\langle\xi+\eta,x\rangle\, {\dd \xi} {\dd \eta}
\]
for $f,\ g\in SL^1(G)$. We say that $\mul$ is a bilinear multiplier for $(X_1,X_2,X)$ if there exists $C>0$
such that
\begin{equation}\label{eq:eq_1}
||B_{\mul}(f, g)||_{X}\leq C||f||_{X_1}||g||_{X_2}
\end{equation}
for any $f,\ g\in SL^1(G)$.  We write $\BM{X_{1},X_{2}}{X}{G}$ for the space of bilinear multipliers for $(X_1,X_2,X)$, and we denote by $\norm{\mul}_{\BM{X_{1},X_{2}}{X}{G}}$ the least constant $C$ satisfying \eqref{eq:eq_1}.
\end{defn}

If $(X_1,X_2)=(L^{p_1},L^{p_2})$ and either $X= L^{p}$ or $X=L^{p,\infty}$, we will write it simply $\BM{p_1,p_2}{p}{G}$, $\wBM{p_1,p_2}{p}{G}$ respectively, for short.

Observe that if $f,g\in SL^1(G)$, then $f\otimes g\in SL^1(G^2)$ and $\mul(\widehat{f}\otimes \widehat g)\in L^1(\widehat{G}^2)$. Then,
$B_\mul(f,g)(x)$ makes pointwise meaning as a continuous function. Observe also that if $\mul=\widehat{K}$ where $K\in L^1_{\rm c} (G^2)$, for any $f,g\in SL^1(G)$, 
\[
    B_{\mul}(f,g)(x)=\iint_{G^2} K(u,v) f(x-u)g(x-v)\, {\dd u}\, {\dd v}.
\]

Here and subsequently, $\mathfrak{c}$ stands for a universal constant that depends only on $0<p,p_1,p_2<\infty$, which value is given by
\[
   \mathfrak{c}=\left\{
                    \begin{array}{ll}
                      \frac{B_{p_1} B_{p_2}}{A_p^2}, & \hbox{if $p<1$;} \\
                      1, & \hbox{if $p\geq 1$},
                    \end{array}
                  \right.
    {},
\]
where, $A_q$ and $B_q$ denotes the best constant on Khintchine's
inequality (see \cite{MR540367}*{Theorem 2.b.3})
$$
    A_q\brkt{\sum \abs{\alpha_j}^2}^{1/2}\leq \norm{\sum_j \alpha_j r_j}_{L^q[0,1]}\leq  B_q \brkt{\sum \abs{\alpha_j}^2}^{1/2}.
$$
Here $\{r_j\}$ stands for the Rademacher's system. 

Our main results can be stated as follows.
\begin{thm}[Homomorphism theorem for bilinear multipliers]\label{thm:main theorem}
Let $G,\Gamma$ be LCA groups and let $\pi:\widehat{G}\to \widehat{\Gamma}$ be
a group homomorphism. Let $\mul\in \mathcal{C}_b(\widehat{\Gamma})$.  Suppose that $1\leq p_{1},\  p_{2}<\infty$, $0<p\leq\infty$ satisfy
\begin{equation*} 
    \frac{1}{p_1}+\frac{1}{p_2}=\frac{1}{p}.
\end{equation*}
The following holds:
\begin{enumerate}
	\item If $\mul\in \BM{p_1,p_2}{p}{\Gamma}\cap \mathcal{C}_b(\widehat \Gamma)$, then $\mul\circ (\pi\otimes \pi)\in
\BM{p_1,p_2}{p}{G}$ and
\[
    \norm{\mul\circ (\pi\otimes \pi)}_{\BM{p_1,p_2}{p}{G}}\leq \mathfrak{c} \norm{\mul}_{\wBM{p_1,p_2}{p}{\Gamma}}.
\]
	\item If $\mul\in  \wBM{p_1,p_2}{p}{\Gamma}\cap \mathcal{C}_b(\widehat \Gamma)$, then $\mul\circ (\pi\otimes \pi)\in
\wBM{p_1,p_2}{p}{G}$  and
\[
    \norm{\mul\circ (\pi\otimes \pi)}_{\wBM{p_1,p_2}{p}{G}}\leq \mathfrak{c} \norm{\mul}_{\wBM{p_1,p_2}{p}{\Gamma}}.
\]
\end{enumerate}
\end{thm}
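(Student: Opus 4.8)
The plan is to realise $B_{\mul\circ(\pi\otimes\pi)}$ on $G$ as a transference, in the spirit of Grafakos and Weiss, of the operator $B_\mul$ on $\Gamma$, and to control the resulting norms by averaging over a F\o lner sequence. Let $\pi^{*}:\Gamma\to G$ be the adjoint homomorphism determined through Pontryagin duality by $\esc{\pi(\xi),t}=\esc{\xi,\pi^{*}(t)}$ for $\xi\in\widehat G$, $t\in\Gamma$; it is continuous because $\pi$ is. Let $R=\set{R_t}_{t\in\Gamma}$ be the representation of $\Gamma$ acting on functions on $G$ by the translations $R_t h(x)=h\brkt{x+\pi^{*}(t)}$. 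Since $\pi^{*}$ is continuous and translations are isometric and strongly continuous on each $L^{q}(G)$ and on $L^{p,\infty}(G)$, $R$ is a uniformly bounded isometric representation; moreover $\Gamma$, being an LCA group, is amenable and thus possesses a F\o lner sequence $\set{F_n}$. The first move is to use the approximation Theorem \ref{thm:Main} to reduce the verification of \eqref{eq:eq_1} for $\mul\circ(\pi\otimes\pi)$ to the case of well-behaved multipliers $\mul=\widehat K$ with $K\in L^1_{\rm c}(\Gamma^2)$; the hypothesis $\mul\in\mathcal{C}_b(\widehat\Gamma)$ ensures that $\mul\circ(\pi\otimes\pi)$ is pointwise well defined and compatible with this approximation.

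For such a kernel I would first establish the transference identity
\[
 B_{\mul\circ(\pi\otimes\pi)}(f,g)(x)=\iint_{\Gamma^2}K(u,v)\,R_u f(x)\,R_v g(x)\,\dd u\,\dd v,\qquad f,g\in SL^1(G),
\]
obtained by inserting the Fourier inversion formula for $f$ and $g$, using $\esc{\xi,\pi^{*}(u)}=\esc{\pi(\xi),u}$ and $\esc{\eta,\pi^{*}(v)}=\esc{\pi(\eta),v}$, and recognising the inner integral as $\widehat K\brkt{\pi(\xi),\pi(\eta)}=\mul\brkt{\pi(\xi),\pi(\eta)}$; Fubini is justified since $K\in L^1_{\rm c}(\Gamma^2)$ and $f,g\in SL^1(G)$. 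The core is then a bilinear F\o lner averaging. From the covariance $R_y B_{\mul\circ(\pi\otimes\pi)}(f,g)(x)=\iint_{\Gamma^2}K(u-y,v-y)\,R_u f(x)\,R_v g(x)\,\dd u\,\dd v$ and the compact support of $K$, for $y\in F_n$ the right-hand side equals, up to an inessential reflection of the kernel, $B_\mul(h_x,k_x)(y)$, where $h_x$ and $k_x$ are the restrictions of $u\mapsto R_u f(x)$ and $v\mapsto R_v g(x)$ to a slightly enlarged set $F_n'$. In the range $p\geq 1$, raising to the power $p$, integrating in $x$ over $G$ and in $y$ over $F_n$, using that each $R_y$ is an $L^{p}(G)$-isometry, applying the multiplier inequality on $\Gamma$ to $B_\mul(h_x,k_x)$ (extended to the compactly supported localisations $h_x,k_x$ by density), and closing with H\"older's inequality with exponents $p_1/p,\ p_2/p$ and Fubini, I expect to arrive at
\[
 \norm{B_{\mul\circ(\pi\otimes\pi)}(f,g)}_{L^p(G)}\leq\brkt{\frac{\abs{F_n'}}{\abs{F_n}}}^{1/p}\norm{\widehat K}_{\BM{p_1,p_2}{p}{\Gamma}}\norm f_{L^{p_1}}\norm g_{L^{p_2}};
\]
letting $n\to\infty$ the F\o lner ratio tends to $1$, so transference does not increase the multiplier norm.

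The constant $\mathfrak c$ and the appearance of the weak-type norm $\norm{\mul}_{\wBM{p_1,p_2}{p}{\Gamma}}$ on the right enter only through the reduction step: Theorem \ref{thm:Main} is what bounds the strong-type norm $\norm{\widehat K}_{\BM{p_1,p_2}{p}{\Gamma}}$ of the approximants, and the analogous weak-type quantities, by $\mathfrak c$ times $\norm{\mul}_{\wBM{p_1,p_2}{p}{\Gamma}}$. I expect the genuine obstacle to lie here rather than in the transference: when $p<1$ the target $L^{p}(G)$ is only quasi-Banach, and in part (2) the target $L^{p,\infty}(G)$ lacks the additive structure needed to run the ``raise to the power $p$ and integrate'' step directly. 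This is exactly what the approximation theorem is designed to circumvent, by a randomisation over independent Rademacher systems that linearises the bilinear expression; Khintchine's inequality applied to the two inputs and to the output then accounts for the factor $\mathfrak c=B_{p_1}B_{p_2}/A_p^{2}$, and reduces the weak-type statement of part (2) to estimates of the type handled by the transference of part (1). Once this reduction is in place, both parts follow by combining it with the F\o lner averaging above, with $\mathfrak c=1$ in the regime $p\geq 1$.
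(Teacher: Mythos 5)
Your overall architecture coincides with the paper's: reduce, via Theorem \ref{thm:Main} together with the continuity hypothesis (which upgrades $(P_2)$ to the everywhere convergence $(P_2^\prime)$ --- this matters, since the image of $\pi\otimes\pi$ may well be a null set in $\widehat{\Gamma}^2$, so a.e.\ convergence alone would say nothing about $\mul_j\circ(\pi\otimes\pi)$), to multipliers of the form $\widehat{K}$ with $K\in L^1_c(\Gamma^2)$; then identify $B_{\mul\circ(\pi\otimes\pi)}$ with the transferred operator built from the dual homomorphism and the translation representation; finally bound the transferred operator and pass to the limit. For part (1) with $p\geq 1$ your plan is sound, the only difference being that you re-derive the transference bound by F\o lner averaging where the paper simply invokes \cite{MR1398100}*{Theorem 1}, whose proof is exactly that averaging.

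The gap lies in how you dispose of the remaining cases. You claim that the Rademacher randomisation behind Theorem \ref{thm:Main} ``linearises the bilinear expression'' and ``reduces the weak-type statement of part (2) to estimates of the type handled by the transference of part (1)'', and that the case $p<1$ is likewise absorbed by the approximation step. That is not what Theorem \ref{thm:Main} does. Property $(P_4)$ bounds $\norm{\mul_j}_{\BM{p_1,p_2}{p}{\Gamma}}$ by $\mathfrak{c}\norm{\mul}_{\BM{p_1,p_2}{p}{\Gamma}}$, and correspondingly the weak norms by the weak norm: the same multiplier class appears on both sides (in particular it does not bound a strong norm by a weak one, as you assert; the paper's proof ends with $\liminf_j\norm{\mul_j}_{\BM{p_1,p_2}{p}{\Gamma}}\leq \mathfrak{c}\norm{\mul}_{\BM{p_1,p_2}{p}{\Gamma}}$). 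The Khintchine argument (Lemma \ref{lem:MZ}, Proposition \ref{prop:clave}) lives entirely inside the proof of the approximation theorem, where it replaces Minkowski's integral inequality in controlling $(\lambda\otimes\mu)*\mul$ for quasi-Banach targets; it converts nothing from weak type to strong type, nor from $p<1$ to $p\geq1$. Consequently, after your reduction you are still holding a kernel $K\in L^1_c(\Gamma^2)$ whose transferred operator must be bounded with target $L^p(G)$, $p<1$, in part (1), and with target $L^{p,\infty}(G)$ in part (2), and your proposal supplies an argument for neither. For part (1) the repair is immediate and shows your restriction to $p\geq1$ was unnecessary: the ``raise to the power $p$ and integrate over F\o lner sets'' computation is valid for every $0<p<\infty$, because $\norm{\cdot}_{L^p}^p$ is additive and H\"older applies with exponents $p_1/p$, $p_2/p$ (note $p/p_1+p/p_2=1$); this is why \cite{MR1398100}*{Theorem 1} covers all $p>0$. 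For part (2) the gap is genuine: $\norm{\cdot}_{L^{p,\infty}}$ is not the integral of a power, so the averaging step has no direct analogue and must be redone at the level of distribution functions; this is precisely the content of \cite{MR1398100}*{Theorem 2}, which the paper cites for the weak case and which your Khintchine reduction cannot replace.
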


\begin{rem}\label{rem: normalized} The condition $\mul\in \mathcal{C}_b(\widehat{G})$ can be relaxed  to the assumption of $\mul$ being normalized (see Definition \ref{def:normalized} below). Indeed, the result holds if $\mul$ is continuous on the image of $\widehat{G}^2$ by $\pi\otimes \pi$ (see Remark \ref{eq:final_remark} below).
\end{rem}
The proof of the previous results rely on a general approximation property of bilinear multipliers, that is the bilinear analogue of \cite{MR2609318}*{Lemma 2}.

\begin{thm}\label{thm:Main} Let $0<p\leq 1\leq p_1,p_2<\infty$. Let $X$ be the $p$-convexification of a RIBFS and let $X_1,X_2$ be RIBFS,  such that $X_i$ is $p_i$-concave for $i=1,2$. For any $\mul\in L^\infty(\widehat{G}^2)\cap\BM{X_{1},X_{2}}{X}{G}$ there
exists a sequence $\{\mul_j\}_j\subset L^\infty(\widehat{G}^2)$ such that:
\begin{enumerate}
    \item[($P_1$)] for each $j$, $\mul_j^{\vee}\in L^1_c(\widehat{G}^2)$;
    \item[($P_2$)]  \label{condition} for almost every $\xi,\eta\in \widehat{G}$, 
    $\lim_j \mul_j(\xi,\eta)=\mul(\xi,\eta)$;
    \item[($P_3$)]  $\sup_j \Vert\mul_j\Vert_\infty\leq \norm{\mul}_\infty$,
    \item[($P_4$)]  $\sup_j \Vert{\mul_j}\Vert_{\BM{X_{1},X_{2}}{X}{G}}\leq \mathfrak{d} 
    \norm{\mul}_{\BM{X_{1},X_{2}}{X}{G}}$,
\end{enumerate} 
where $\mathfrak{d}=1$ if $p\geq 1$, or $\mathfrak{d}=M_{(p_1)}(X_1)M_{(p_2)}(X_2) \mathfrak{c}$ otherwise.
Moreover, if $\mul\in \mathcal{C}_b(\widehat{G})$ or $\mul$ is normalized (see Definition \ref{def:normalized} below) then
\begin{enumerate}
	\item[($P_2^\prime$)]  \label{condition2} for every $\xi,\eta\in \widehat{G}$, 
    $\lim_j \mul_j(\xi,\eta)=\mul(\xi,\eta)$.
\end{enumerate}
\end{thm}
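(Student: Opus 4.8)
The plan is to build $\mul_j$ by composing two regularizations on $\widehat{G}^2$: a multiplicative smoothing that makes the convolution kernel continuous, followed by a convolution truncation that makes it compactly supported, arranged so that neither step inflates the multiplier norm by more than the factor $\mathfrak{d}$. Concretely, I would fix a Fej\'er-type approximate identity $\set{\psi_\beta}$ on $G$ with $\psi_\beta\ge 0$, $\norm{\psi_\beta}_1=1$, $\widehat{\psi_\beta}\in C_c(\widehat{G})$, $0\le\widehat{\psi_\beta}\le 1$, $\widehat{\psi_\beta}\to 1$, put $\phi_\beta=\psi_\beta\otimes\psi_\beta$, and a Fej\'er-type approximate identity $\set{\Phi_\alpha}$ on $\widehat{G}^2$ with $\Phi_\alpha\ge 0$, $\norm{\Phi_\alpha}_1=1$, $\Phi_\alpha^\vee\in C_c(G^2)$ and $\Phi_\alpha^\vee\to 1$. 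Set $\mul_{\alpha,\beta}=\brkt{\mul\cdot\widehat{\phi_\beta}}*\Phi_\alpha$. Since $\widehat{\phi_\beta}=\widehat{\psi_\beta}\otimes\widehat{\psi_\beta}\in C_c(\widehat{G}^2)$, the function $\mul\,\widehat{\phi_\beta}$ lies in $L^1(\widehat{G}^2)$, so its inverse transform $\mul^\vee*\phi_\beta$ is continuous and bounded; multiplying by $\Phi_\alpha^\vee\in C_c$ gives $\mul_{\alpha,\beta}^\vee=\brkt{\mul^\vee*\phi_\beta}\,\Phi_\alpha^\vee\in C_c(G^2)\subset L^1_c(G^2)$, which is $(P_1)$. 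Because $\abs{\widehat{\phi_\beta}}\le 1$ and $\norm{\Phi_\alpha}_1=1$, Young's inequality gives $(P_3)$. As $\widehat{\phi_\beta}\to 1$ boundedly and pointwise while $\cdot*\Phi_\alpha\to\mathrm{id}$ at Lebesgue points, a diagonal choice $\mul_j=\mul_{\alpha(j),\beta(j)}$ yields $(P_2)$; when $\mul$ is continuous both limits are locally uniform, which gives $(P_2')$.

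For $(P_4)$ I would treat the two factors separately. The smoothing is harmless: since $\widehat{\phi_\beta}=\widehat{\psi_\beta}\otimes\widehat{\psi_\beta}$, the convolution theorem gives the exact identity
\[
   B_{\mul\widehat{\phi_\beta}}(f,g)=B_\mul\brkt{\psi_\beta*f,\ \psi_\beta*g},
\]
so, using $\norm{\psi_\beta*f}_{X_1}\le\norm{\psi_\beta}_1\norm{f}_{X_1}=\norm{f}_{X_1}$ and likewise for $g$, the multiplier norm is not increased. It therefore remains to bound the truncation $\nu\mapsto\nu*\Phi_\alpha$ applied to $\nu=\mul\,\widehat{\phi_\beta}$. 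Expanding the convolution and using $\widehat{f_{-\zeta_1}}=\widehat{f}(\cdot+\zeta_1)$ produces the averaged representation
\[
   B_{\nu*\Phi_\alpha}(f,g)(x)=\int_{\widehat{G}^2}\Phi_\alpha(\zeta_1,\zeta_2)\,\esc{\zeta_1+\zeta_2,x}\,B_\nu\brkt{f_{-\zeta_1},g_{-\zeta_2}}(x)\,\dd\zeta,
\]
a genuine average (as $\Phi_\alpha\ge 0$, $\norm{\Phi_\alpha}_1=1$) of $B_\nu$ on modulations of $f,g$, weighted by the unimodular phase $\esc{\zeta_1+\zeta_2,x}$; crucially $\abs{f_{-\zeta_1}}=\abs{f}$, so these modulations preserve the $X_1$- and $X_2$-norms.

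For $p\ge 1$ (only $p=1$ in the present range) $X$ is a BFS, and Minkowski's integral inequality together with $\norm{\Phi_\alpha}_1=1$ and $\abs{\esc{\zeta_1+\zeta_2,x}}=1$ gives the bound with $\mathfrak{d}=1$. The case $p<1$ is the heart of the matter and the step I expect to be hardest: $X$ is then only a quasi-Banach lattice, Minkowski fails, and a disjointness example shows the phase must be exploited rather than discarded by the triangle inequality. Following the bilinear transference philosophy of \cite{MR1398100}, I would discretize $\Phi_\alpha$ as $\sum_k\lambda_k\delta_{\zeta^k}$ and bring in the Rademacher system $\set{r_k}$ to linearize the resulting diagonal sum as $\int_0^1 B_\nu(\cdot,\cdot)\,\dd\omega$ of two random superpositions of the modulated data. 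Recalling that $\norm{h}_X=\norm{\abs{h}^p}_Y^{1/p}$ for a BFS $Y$ lets me descend to the Banach space $Y$, where Minkowski's integral inequality is again available; there the constant arises from the interplay of three estimates. Khintchine's lower inequality at the exponent $p$ (constant $A_p$) passes between the output and its square function, contributing $A_p^{-2}$; Khintchine's upper inequality at the exponents $p_1,p_2$ (constants $B_{p_1},B_{p_2}$) controls the random superpositions of the data; and the $p_i$-concavity of $X_i$ (constants $M_{(p_1)}(X_1),M_{(p_2)}(X_2)$) is precisely what permits returning from these Rademacher averages to $\norm{f}_{X_1}$ and $\norm{g}_{X_2}$ in the non-Hilbertian spaces $X_i$. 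Collecting the factors produces $\mathfrak{c}=B_{p_1}B_{p_2}/A_p^2$ and hence $\mathfrak{d}=M_{(p_1)}(X_1)M_{(p_2)}(X_2)\,\mathfrak{c}$.

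Since the discrete estimate is uniform in the partition, refining the discretization and invoking the Fatou property (Lemma \ref{lem:Fatou}) returns the continuous averaging inequality, so that $\norm{\mul_{\alpha,\beta}}_{\BM{X_{1},X_{2}}{X}{G}}\le\mathfrak{d}\,\norm{\mul}_{\BM{X_{1},X_{2}}{X}{G}}$ uniformly in $\alpha,\beta$; the diagonal sequence constructed above then realizes $(P_1)$--$(P_4)$ simultaneously. The genuinely delicate point, and the one I would write out in full detail, is the $p<1$ bookkeeping of the Rademacher/Khintchine passages together with the $p_i$-concavity of $X_1,X_2$, as it is there that the failure of the triangle inequality in the target is circumvented and the precise constant $\mathfrak{d}$ is produced.
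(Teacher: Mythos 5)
Your construction follows the same two--step scheme as the paper's proof (a multiplicative regularization plus a convolution with an approximate identity, with $(P_4)$ coming from a Khintchine/concavity argument), and your treatment of $(P_1)$, $(P_3)$, the multiplication step, and the case $p=1$ is correct. The genuine gap is in the step you yourself single out as the heart of the matter: the $p<1$ bound for $\nu\mapsto\nu*\Phi_\alpha$. You take a \emph{single} approximate identity $\Phi_\alpha$ on $\widehat{G}^2$, discretize it as $\sum_k\lambda_k\delta_{\zeta^k}$, and linearize the resulting \emph{diagonal} sum with \emph{one} Rademacher parameter as $\int_0^1B_\nu(\cdot,\cdot)\,\dd\omega$. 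This chain cannot be completed. Once the diagonal sum is written as $\int_0^1 B_\nu\brkt{\sum_j r_j(\omega)a_jF_j,\sum_k r_k(\omega)b_kG_k}\dd\omega$, descending to $Y$ through $\norm{h}_X=\norm{\abs{h}^p}_Y^{1/p}$ would require the pointwise bound $\abs{\int_0^1 B_\omega\,\dd\omega}^p\le\int_0^1\abs{B_\omega}^p\,\dd\omega$, which is \emph{false} for $p<1$ (for nonnegative integrands Jensen for the concave function $t\mapsto t^p$ gives the reverse inequality), so Minkowski in $Y$ is never reached. If instead you first pass to a square function by Cauchy--Schwarz, you face the diagonal square function $\brkt{\sum_k\lambda_k\abs{B_\nu(F_k,G_k)}^2}^{1/2}$, and the only square function the bilinear Khintchine machinery controls is the full doubly-indexed one; embedding the diagonal into a full grid forces weights whose sum behaves like $\sum_k\sqrt{\lambda_k}$, which blows up as the partition is refined. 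What makes the argument work --- and this is exactly how the paper proceeds --- is to convolve with a \emph{tensor product} kernel, $\Phi_\alpha=\rho_\alpha\otimes\rho_\alpha$ (the paper's $\widehat{\varphi_j}\otimes\widehat{\varphi_j}$, and more generally $\lambda\otimes\mu$ with $\lambda,\mu\in M(\widehat{G})$ in its Proposition on convolution). Then the discretization is a product grid $\sum_{j,k}a_jb_k\delta_{(\zeta_j,\gamma_k)}$, Cauchy--Schwarz splits the product weights between the two slots, and one applies the Marcinkiewicz--Zygmund inequality for $\brkt{\sum_{j,k}\abs{B_\nu(f_j,g_k)}^2}^{1/2}$, whose proof uses \emph{two} independent Rademacher systems on $[0,1]^2$: the pointwise identity $\sum_{j,k}r_j(s)r_k(t)B_\nu(f_j,g_k)=B_\nu\brkt{\sum_jr_j(s)f_j,\sum_kr_k(t)g_k}$ together with the bilinear Khintchine inequality at exponent $p$ puts the $p$-th power \emph{inside} the $\dd s\,\dd t$ integral, and only then is Minkowski in $Y$ legitimate. (You also need, to justify the Riemann-sum approximation uniformly in $x$ and absorb the $O(1/n)$ error via Fatou on compacta, a uniform continuity lemma for $(\zeta,\gamma)\mapsto B_\nu(M_{-\zeta}f,M_{-\gamma}g)(x)$, which the paper isolates as a separate lemma and you do not mention.)

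Two smaller gaps. First, your proof of $(P_2)$ invokes Lebesgue points, which are not available on a general LCA group (there is no differentiation theorem in this generality); this is repairable, and in fact more easily in your setup than in the paper's: since $\mul\widehat{\phi_\beta}\in L^1(\widehat{G}^2)$, the convolution $(\mul\widehat{\phi_\beta})*\Phi_\alpha$ converges to $\mul\widehat{\phi_\beta}$ in $L^1$ norm, so choosing $\alpha(\beta)$ with error at most $2^{-\beta}$ and applying Borel--Cantelli gives a.e.\ convergence of the diagonal sequence, whereas the paper, which convolves the merely bounded $\mul$ itself, needs a separate subsequence lemma for $L^\infty$ symbols. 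Second, you establish $(P_2^\prime)$ only for $\mul\in\mathcal{C}_b$, while the statement also covers \emph{normalized} $\mul$ --- the case actually needed later for the bilinear Hilbert transform, whose symbol is discontinuous. Here your order of operations hurts you: normalization controls $\mul*\Phi_\alpha\to\mul$ pointwise, not $(\mul\widehat{\phi_\beta})*\Phi_\alpha$, and bridging the two requires an extra argument showing $((\widehat{\phi_\beta}-1)\mul)*\Phi_\alpha\to0$ everywhere (using $\widehat{\phi_\beta}\to1$ uniformly on compacta and the concentration of $\Phi_\alpha$); the paper's order --- convolve first, then multiply by $\widehat{h_j\otimes h_j}\to1$ --- lets the normalization hypothesis plug in directly.
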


\section{Proof of Theorem \ref{thm:Main}}\label{sect:proof_main}

 In order to prove the Theorem we need first to prove some technical lemmas. Let us denote by $M_\xi f(x)=\overline{\esc{\xi,x}} f(x)$.

\begin{lem}\label{lem:technic_1} 
    Let $f,g\in SL^1(G)$. For any $x\in G$, the function
    \[
        \widehat{G}^2\ni(\zeta,\gamma)\mapsto F_x(\zeta,\gamma):=B_{\mul} (M_{-\zeta} f, M_{-\gamma} g)(x),
    \]
    is uniformly continuous (uniformly on $x$). Moreover, for any
    $n$, there exists a symmetric relatively compact open neighbourhood $U_n$ of
    $e_{\widehat{G}}$, such that $U_{n}+U_n\subset U_{n-1}$ and,  for any $\zeta,\zeta^\prime\in U_n$
    \[
        \sup_{x\in G, \gamma\in \widehat{G}} \abs{F_x(\zeta,\gamma)-F_x(\zeta^\prime,\gamma)}\leq \frac{1}{n}.
    \]
\end{lem}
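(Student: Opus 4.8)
The plan is to rewrite $F_x$ as the pairing of the bounded symbol $\mul$ against a translate of the fixed integrable function $\widehat f\otimes\widehat g$, and then to reduce both assertions to the (uniform) continuity of translation on $L^1(\widehat G^2)$. First I would record the effect of the modulation on the Fourier side: since $\esc{-\zeta,u}=\overline{\esc{\zeta,u}}$ we have $M_{-\zeta}f(u)=\esc{\zeta,u}f(u)$, whence $\widehat{M_{-\zeta}f}(\xi)=\widehat f(\xi-\zeta)$ and likewise $\widehat{M_{-\gamma}g}(\eta)=\widehat g(\eta-\gamma)$. Substituting into the definition of $B_\mul$ yields
\[
F_x(\zeta,\gamma)=\iint_{\widehat G^2}\widehat f(\xi-\zeta)\,\widehat g(\eta-\gamma)\,\mul(\xi,\eta)\,\esc{\xi+\eta,x}\,\dd\xi\,\dd\eta.
\]
Writing $\Phi:=\widehat f\otimes\widehat g\in L^1(\widehat G^2)$ (integrable precisely because $f,g\in SL^1(G)$) and $\tau_{(\zeta,\gamma)}\Phi(\xi,\eta):=\Phi(\xi-\zeta,\eta-\gamma)$, this is $F_x(\zeta,\gamma)=\iint \tau_{(\zeta,\gamma)}\Phi\cdot\mul\cdot\esc{\xi+\eta,x}$. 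Since $\abs{\esc{\xi+\eta,x}}=1$ and $\mul\in L^\infty(\widehat G^2)$, for any two pairs of parameters I obtain the basic estimate
\[
\abs{F_x(\zeta,\gamma)-F_x(\zeta',\gamma')}\leq \norm{\mul}_\infty\,\norm{\tau_{(\zeta,\gamma)}\Phi-\tau_{(\zeta',\gamma')}\Phi}_{L^1(\widehat G^2)},
\]
in which the right-hand side is completely independent of $x$.

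For the first assertion I would simply invoke that translation is continuous on $L^1$ of a locally compact group, hence, by invariance of the $L^1$-norm, uniformly continuous. The right-hand side above equals $\norm{\mul}_\infty\norm{\Phi-\tau_{(\zeta'-\zeta,\gamma'-\gamma)}\Phi}_{L^1(\widehat G^2)}$, which tends to $0$ as $(\zeta',\gamma')\to(\zeta,\gamma)$ at a rate depending only on the increment $(\zeta'-\zeta,\gamma'-\gamma)$. This is exactly uniform continuity of $F_x$ on $\widehat G^2$ with a modulus independent of $x$.

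For the quantitative part I would exploit the tensor structure of $\Phi$ to decouple the variables. Taking $\gamma'=\gamma$, the factorisation gives
\[
\norm{\tau_{(\zeta,\gamma)}\Phi-\tau_{(\zeta',\gamma)}\Phi}_{L^1(\widehat G^2)}=\norm{\widehat f(\cdot-\zeta)-\widehat f(\cdot-\zeta')}_{L^1(\widehat G)}\,\norm{\widehat g}_{L^1(\widehat G)}=\norm{\widehat f-\tau_{\zeta'-\zeta}\widehat f}_{L^1(\widehat G)}\,\norm{\widehat g}_{1},
\]
a quantity depending only on the increment $\zeta'-\zeta$ and, crucially, neither on $\gamma$ nor on $x$. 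Hence, by continuity of translation in $L^1(\widehat G)$, there is for each $n$ an open neighbourhood $V_n$ of $e_{\widehat G}$ with $\norm{\mul}_\infty\norm{\widehat g}_1\norm{\widehat f-\tau_\sigma\widehat f}_1\leq 1/n$ for all $\sigma\in V_n$ (the degenerate cases $\mul=0$ or $\widehat g=0$ being trivial). It then remains to produce the $U_n$: starting from any relatively compact symmetric open $U_0$, I would inductively choose, using local compactness of $\widehat G$ and continuity of the group operations at the identity, a relatively compact symmetric open $U_n$ with $U_n+U_n\subset U_{n-1}\cap V_n$. For $\zeta,\zeta'\in U_n$ symmetry gives $\zeta'-\zeta\in U_n+U_n\subset V_n$, so the displayed estimate yields $\sup_{x,\gamma}\abs{F_x(\zeta,\gamma)-F_x(\zeta',\gamma)}\leq 1/n$, while $U_n+U_n\subset U_{n-1}$ holds by construction.

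The only genuinely delicate point is securing uniformity in $\gamma$ together with the uniformity in $x$ in the second part; this is exactly what the tensor structure $\Phi=\widehat f\otimes\widehat g$ delivers, by converting the $\zeta$-increment into a factor that does not see $\gamma$. Everything else reduces to the standard continuity of translation in $L^1$ and the routine construction of a nested basis of relatively compact symmetric neighbourhoods of the identity in the LCA group $\widehat G$.
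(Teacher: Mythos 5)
Your proof is correct and follows essentially the same route as the paper: both reduce the difference $F_x(\zeta,\gamma)-F_x(\zeta',\gamma)$ to the bound $\norm{\mul}_\infty\norm{\widehat f-\tau_{\zeta-\zeta'}\widehat f}_{L^1(\widehat G)}\norm{\widehat g}_{L^1(\widehat G)}$, which is independent of $x$ and $\gamma$, and then invoke the continuity of translation in $L^1(\widehat G)$. The only difference is one of detail: the paper stops at ``the result easily follows,'' whereas you explicitly carry out the inductive construction of the symmetric relatively compact neighbourhoods $U_n$ with $U_n+U_n\subset U_{n-1}\cap V_n$, which is exactly the step the paper leaves implicit.
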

\begin{proof} Let $\zeta,\zeta^\prime, \gamma\in \widehat{G}$, $x\in G$. Since Haar measure is invariant under translations, it holds
\begin{eqnarray*}
    \lefteqn{\left|F_{x}(\zeta,\gamma) - F_{x}(\zeta^\prime,\gamma)\right|=}& &\\
    &=& \iint_{\R^2}
    \mul(\xi,\eta)
    \brkt{\widehat{f}(\xi+\zeta)-\widehat{f}(\xi+\zeta^\prime)}
    \widehat{g}(\eta+\gamma) \esc{\xi+\eta, x}\, {\dd \xi}\, {\dd \eta}\\
    &\leq& \norm{\mul}_{\infty}
    \norm{\widehat{f}-\tau_{\zeta-\zeta^\prime}\widehat{f}}_{L^1(\widehat{G})}
    \norm{\widehat{g}}_{L^1(\widehat{G})},
\end{eqnarray*}
where $\tau_\zeta$ stands for the translation operator. 
Then,
\[
    \sup_{x\in G.\gamma\in \widehat{G}} \left|F_{x}(\zeta,\gamma) - F_{x}(\zeta^\prime,\gamma)\right|\leq \norm{\mul}_{\infty}
    \norm{\widehat{f}-L_{\zeta-\zeta^\prime}\widehat{f}}_{L^1(\widehat{G})}
    \norm{\widehat{g}}_{L^1(\widehat{G})}.
\]
The result easily follows by the uniform continuity of translations  in $L^1(\widehat{G})$ \cite{MR551496}*{(20.4)}.
\end{proof}

\begin{lem}[Marcinkiewicz-Zygmund's bilinear inequality] \label{lem:MZ}Let $X_1,X_2$ be BFSs and let $X$ be a QBFS. Assume that for some $0<p\leq 1\leq p_1,p_2<\infty$, $X$ is the $p$-convexification of a BFS and $X_j$ is $p_j$-concave $j=1,2$. 
If $T$ is a bounded bilinear operator such that
\[
    \norm{{T(f,g)}}_{X}\leq \norm{T} \norm{f}_{X_1}\norm{g}_{X_2},
\]
then
\[
    \norm{\brkt{\sum_{j,k} \abs{T(f_j,g_k)}^2}^{1/2}}_{X}\leq  \mathfrak{d}\norm{T} \norm{\brkt{\sum_{j} \abs{g_j}^2}^{1/2}}_{X_1}  \norm{\brkt{\sum_{k} \abs{g_k}^2}^{1/2}}_{X_2},
\]
 for any family $\{f_j\}_j\subset X_1$, $\{g_j\}_j \subset X_2$ and $\mathfrak{d}$ as above.
\begin{proof}
	Observe that is suffices to prove the result for $\{f_j\}$ and $\{g_k\}$ with a finite number of elements. The assumption on $X$ implies that $\norm{f}_{X}=\norm{\abs{f}^p}_Y^{1/p}$ where $Y$ is a BFS. Khintchine's bilinear inequality \cite{MR0290095}*{Appendix
    D}, and the $p$-convexity of the space $X$ yield
   \begin{eqnarray*}
            \lefteqn{\norm{\brkt{\sum_{j,k} \abs{T(f_j,g_k)}^2}^{1/2}}_{X}\leq \frac{1}{A_p^2} \norm{\iint_{[0,1]^2} \abs{\sum_{j,k}r_j(s)r_k(t)
            T(f_j,g_k)}^p \, {\dd s }\, {\dd t }}_Y^{1/p}}\\
            &\leq& \frac{1}{A_{p_3}^2}\brkt{\iint_{[0,1]^2} \norm{{T\brkt{\sum_j r_j(s) f_j, \sum_k r_k(t)g_k }}}_{X}^p
             \, {\dd s }\, {\dd t }}^{1/p}\\
            &\leq& \frac{\norm{T}}{A_{p_3}^2}\brkt{\int_0^1 \norm{\sum_j r_j(s)
            f_j}_{X_1}^p\, {\dd s }}^{1/p}\brkt{\int_0^1 \norm{\sum_k r_k(t)
            g_k}_{X_2}^p\, {\dd t }}^{1/p}.
    \end{eqnarray*}
    Since $X_1$ is $p_1$-concave and $p\leq
    1$, H\"older inequality and \cite{MR540367}*{Theorem 1.d.6} yield
    \[
        \brkt{\int_0^1 \norm{\sum_j r_j(s)
            f_j}_{X_1}^p\!\!\!\!  {\dd s }}^{\frac 1 p}\leq {\int_0^1 \norm{\sum_j r_j(s)
            f_j}_{X_1}\!\!\!\!   {\dd s }}\leq M_{(p_1)}(X_1) B_{p_1} \norm{\brkt{\sum_{j}
            \abs{f_j}^2}^{\frac 1 2}}_{X_1}.
    \]
    This finishes the proof because a similar inequality holds for the other term.
\end{proof}
\end{lem}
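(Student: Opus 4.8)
The plan is to transplant the classical Marcinkiewicz--Zygmund scheme to the quasi-Banach target $X$ by exploiting its $p$-convexity. First I would reduce to finite families $\set{f_j}_{j=1}^N$ and $\set{g_k}_{k=1}^N$; the general statement then follows by letting $N\to\infty$, since the square functions increase monotonically and Lemma~\ref{lem:Fatou} together with property~(3) of a QBFS permits passage to the limit with no loss in the constant. For finite families I would introduce two independent Rademacher systems $\set{r_j(s)}_j$, $\set{r_k(t)}_k$ on $[0,1]^2$ and apply the bilinear Khintchine inequality \cite{MR0290095}*{Appendix D}, which bounds the $\ell^2$-square function pointwise by the $L^p([0,1]^2)$-average of the randomised sum with constant $A_p^{-2}$. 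By bilinearity of $T$, the randomised double sum $\sum_{j,k}r_j(s)r_k(t)\,T(f_j,g_k)$ equals $T\brkt{\sum_j r_j(s)f_j,\sum_k r_k(t)g_k}$. The step I expect to be the main obstacle is to move the $X$-quasi-norm inside the average over $[0,1]^2$: a quasi-norm does not obey the triangle inequality, so here I would use the hypothesis that $X$ is the $p$-convexification of a BFS $Y$, writing $\norm{h}_X=\norm{\abs{h}^p}_Y^{1/p}$. Raising to the power $p$ converts the quasi-norm into the genuine norm $\norm{\cdot}_Y$, in which the integral Minkowski inequality applies, yielding
\[
    \norm{\brkt{\sum_{j,k}\abs{T(f_j,g_k)}^2}^{1/2}}_X\leq \frac{1}{A_p^2}\brkt{\iint_{[0,1]^2}\norm{T\brkt{\sum_j r_j(s)f_j,\sum_k r_k(t)g_k}}_X^{p}\,\dd s\,\dd t}^{1/p}.
\]

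Next I would insert the boundedness hypothesis on $T$ inside the integral. This factorises the integrand as $\norm{T}\,\norm{\sum_j r_j(s)f_j}_{X_1}\,\norm{\sum_k r_k(t)g_k}_{X_2}$, so that the double integral splits into a product of two single integrals over $[0,1]$. Since $p\leq 1$ and $[0,1]$ carries a probability measure, H\"older's inequality gives $\brkt{\int_0^1\norm{\sum_j r_j(s)f_j}_{X_1}^{p}\,\dd s}^{1/p}\leq \int_0^1\norm{\sum_j r_j(s)f_j}_{X_1}\,\dd s$, which disposes of the inconvenient exponent $p$ and reduces each factor to a first-power Rademacher average; the same is done for the $g_k$-factor.

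Finally, for each factor I would combine the $p_i$-concavity of $X_i$ with the upper Khintchine bound (constant $B_{p_i}$) through \cite{MR540367}*{Theorem 1.d.6} to estimate
\[
    \int_0^1\norm{\sum_j r_j(s)f_j}_{X_1}\,\dd s\leq M_{(p_1)}(X_1)\,B_{p_1}\,\norm{\brkt{\sum_j\abs{f_j}^2}^{1/2}}_{X_1},
\]
and symmetrically for $\set{g_k}_k$ with $M_{(p_2)}(X_2)$ and $B_{p_2}$. Collecting the accumulated constants $A_p^{-2}$, $B_{p_1}B_{p_2}$ and $M_{(p_1)}(X_1)M_{(p_2)}(X_2)$ reproduces exactly $\mathfrak{d}=M_{(p_1)}(X_1)M_{(p_2)}(X_2)\,\mathfrak{c}$, completing the estimate.
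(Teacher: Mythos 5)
Your proposal is correct and follows essentially the same route as the paper's own proof: bilinear Khintchine (Stein, Appendix D) with constant $A_p^{-2}$, passage to the $Y$-norm via the $p$-convexification so that Minkowski's integral inequality applies, bilinearity of $T$ to collapse the randomised double sum, H\"older on the probability space $[0,1]$ to trade the $L^p$-average for an $L^1$-average, and finally $p_i$-concavity with \cite{MR540367}*{Theorem 1.d.6} to produce the factors $M_{(p_i)}(X_i)B_{p_i}$, recovering $\mathfrak{d}=M_{(p_1)}(X_1)M_{(p_2)}(X_2)\,\mathfrak{c}$. The only (immaterial) difference is that you spell out the reduction to finite families via Fatou's lemma, which the paper leaves as an observation.
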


The following result extends \cite{MR2172393}*{Lemma 2.2} for the case where the target space is not Banach and it is the bilinear unweighed analogue of \cite{MR2888205}*{Lemma 3.6}. Before we discuss it, we introduce some notation. We denote by $M(\widehat{G})$ the space of complex measures $\lambda$ defined on $\widehat{G}$, with finite total variation $\norm{\lambda}_{M(\widehat{G})}=\int_G \dd \abs{\lambda}(x)$. The convolution of a complex measure and a function is defined in the usual way as in \cite{MR551496}*{(20.12)}. We say that a bounded function $\mul$ is a Fourier multiplier for $X$ if the operator defined on $SL^1(G)$ by 
\[
	T_\mul f(x)=\int_{\widehat{G}} \mul(\xi)\widehat{f}(\xi)\esc{\xi,x}\, \dd \xi,
\]
extends to a Bounded operator on $X$.   We write $\mathcal{M}_X(G)$ for denoting the space of linear multipliers acting on $X$ and $\Vert{\mul}\Vert_{\mathcal{M}_X(G)}$ denotes the norm of the associated operator $T_\mul$. 

\begin{prop}\label{prop:clave}  Let $0<p\leq 1\leq p_1,p_2<\infty$. Let $X$ be the $p$-convexification of a RIBFS and let $X_1,X_2$ be RIBFS,  such that $X_i$ is $p_i$-concave for $i=1,2$. Let $\mul\in \BM{X_{1},X_{2}}{X}{G}$. The following holds:
\begin{enumerate}
    \item If $\mul_{1}\in \mathcal{M}_{X_1}(G)$ and $\mul_{2}\in \mathcal{M}_{X_{2}}(G)$, then
    $(\mul_{1}\otimes \mul_2)\mul\in \BM{X_{1},X_{2}}{X}{G}$ and 
              \[
                \norm{(\mul_{1}\otimes \mul_2)\mul}_{ \BM{X_{1},X_{2}}{X}{G}}\leq \norm{\mul}_{ \BM{X_{1},X_{2}}{X}{G}}\norm{\mul_1}_{\mathcal{M}_{X_1}(G)}\norm{\mul_{2}}_{\mathcal{M}_{X_2}(G)}
              \]

     \item If $\lambda,\mu\in M(\widehat{G})$, then $(\lambda\otimes \mu)*\mul\in \BM{X_{1},X_{2}}{X}{G}$ and
    \[
        \norm{(\lambda\otimes \mu)*\mul}_{\BM{X_{1},X_{2}}{X}{G}}\leq
        \mathfrak{d}\norm{\lambda}_{M(\widehat{G})}\norm{\mu}_{M(\widehat{G})}\norm{\mul}_{\BM{X_{1},X_{2}}{X}{G}},
    \]
with $\mathfrak{d}$ as above.
\end{enumerate}
\end{prop}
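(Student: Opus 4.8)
The plan is to reduce each of the two parts to the defining bilinear estimate for $\mul$ by realising the new operator as $B_\mul$ acting on transformed inputs. For part (1) I would first record the algebraic identity
\[
B_{(\mul_{1}\otimes \mul_{2})\mul}(f,g)=B_\mul(T_{\mul_1}f,\,T_{\mul_2}g),\qquad f,g\in SL^1(G),
\]
which is immediate from the definitions, since $\widehat{T_{\mul_i}h}=\mul_i\widehat h$ and hence the integrand $\mul_1(\xi)\mul_2(\eta)\mul(\xi,\eta)\widehat f(\xi)\widehat g(\eta)\esc{\xi+\eta,x}$ producing $B_{(\mul_{1}\otimes \mul_{2})\mul}$ is exactly the one producing $B_\mul(T_{\mul_1}f,T_{\mul_2}g)$. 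Granting this, I apply the bilinear bound for $\mul$ followed by the boundedness of $T_{\mul_1}$ on $X_1$ and of $T_{\mul_2}$ on $X_2$, which gives the stated product bound with no extra constant. The only delicate point is that $T_{\mul_i}f$ need not lie in $SL^1(G)$, only that $\mul_i\widehat f$ is integrable; I would remove this by approximating $T_{\mul_i}f$ within $X_i$ by functions in $SL^1(G)\cap X_i$ (dense when the norm is absolutely continuous) and invoking Lemma \ref{lem:Fatou} to pass to the limit.

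For part (2) the starting point is the representation obtained by Fubini and the translation invariance of Haar measure, i.e.\ the computation underlying Lemma \ref{lem:technic_1}:
\[
B_{(\lambda\otimes\mu)*\mul}(f,g)(x)=\iint_{\widehat G^2}\esc{\zeta+\omega,x}\,B_\mul(M_{\zeta}f,M_{\omega}g)(x)\,\dd\lambda(\zeta)\,\dd\mu(\omega).
\]
When $p\geq 1$ the space $X$ is Banach, so Minkowski's integral inequality lets me pull the quasi-norm inside the integral; since modulation is an isometry on the rearrangement invariant spaces $X_1,X_2$ (because $\abs{M_{\zeta}f}=\abs f$), one obtains directly the bound with $\norm{\lambda}_{M(\widehat G)}\norm{\mu}_{M(\widehat G)}$ and $\mathfrak{d}=1$.

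The substance is the quasi-Banach range $p<1$, where Minkowski fails and a naive use of the $p$-triangle inequality only produces the too-large $\ell^p$-quasinorms of $\lambda,\mu$ rather than their total variations. Here I would first reduce to finitely supported measures $\lambda=\sum_j a_j\delta_{\zeta_j}$, $\mu=\sum_k b_k\delta_{\omega_k}$ with $\sum_j\abs{a_j}\leq\norm{\lambda}_{M(\widehat G)}$: the integrand above is (uniformly) continuous in $(\zeta,\omega)$ by Lemma \ref{lem:technic_1}, so the integral is approximated by Riemann sums, and Lemma \ref{lem:Fatou} transfers the resulting bound. The key device is a \emph{weighted} Cauchy--Schwarz that splits each mass as $\abs{a_j}=\abs{a_j}^{1/2}\cdot\abs{a_j}^{1/2}$ (and likewise $\abs{b_k}$), bounding the finite sum pointwise by
\[
\brkt{\sum_{j,k}\abs{a_j}\abs{b_k}}^{1/2}\brkt{\sum_{j,k}\abs{a_j}\abs{b_k}\,\abs{B_\mul(M_{\zeta_j}f,M_{\omega_k}g)(x)}^2}^{1/2}.
\]
Taking the $X$-quasinorm and applying Lemma \ref{lem:MZ} to the families $f_j=\abs{a_j}^{1/2}M_{\zeta_j}f$ and $g_k=\abs{b_k}^{1/2}M_{\omega_k}g$, the square functions collapse by modulation invariance, $\brkt{\sum_j\abs{a_j}\abs{M_{\zeta_j}f}^2}^{1/2}=\norm{\lambda}_{M(\widehat G)}^{1/2}\abs f$, and multiplying the two prefactors $\norm{\lambda}_{M(\widehat G)}^{1/2},\norm{\mu}_{M(\widehat G)}^{1/2}$ from Cauchy--Schwarz with those from Lemma \ref{lem:MZ} yields exactly $\mathfrak{d}\,\norm{\lambda}_{M(\widehat G)}\norm{\mu}_{M(\widehat G)}\norm{\mul}_{\BM{X_1,X_2}{X}{G}}\norm f_{X_1}\norm g_{X_2}$, with $\mathfrak{d}=M_{(p_1)}(X_1)M_{(p_2)}(X_2)\mathfrak{c}$.

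The main obstacle is precisely this $p<1$ estimate: getting the sharp total-variation bound instead of a lossy $\ell^p$ or $\ell^2$ one. The resolution is the weighted Cauchy--Schwarz splitting, which distributes half of each mass into the Cauchy--Schwarz prefactor and half into the inputs fed to Marcinkiewicz--Zygmund, so that the isometric invariance of modulation on rearrangement invariant spaces turns the square functions into clean total masses. The secondary technical point is the rigorous reduction to discrete measures, for which Lemma \ref{lem:technic_1} (continuity of the integrand) and Lemma \ref{lem:Fatou} (passage to the limit) are exactly the right tools.
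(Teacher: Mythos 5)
Your proposal is correct and follows essentially the same route as the paper's proof: the same representation formula for $B_{(\lambda\otimes\mu)*\mul}$, Minkowski's inequality in the Banach case, discretization of the measures via the uniform continuity of Lemma \ref{lem:technic_1}, the weighted Cauchy--Schwarz splitting $\abs{a_j}=\abs{a_j}^{1/2}\abs{a_j}^{1/2}$ feeding into the Marcinkiewicz--Zygmund Lemma \ref{lem:MZ}, and the collapse of the square functions by modulation invariance, with Fatou-type arguments to pass to the limit. The only packaging differences are that the paper treats part (1) as immediate (omitting the proof), and in part (2) it organizes the discretization through finite disjoint coverings with explicit $1/n$ error terms, restriction to compact subsets of $G$, and a final compact exhaustion of $\widehat{G}$ for non-compactly supported measures, rather than through Riemann sums.
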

\begin{proof}
The fist assertion is almost direct, so we omit the proof. We shall prove the second one.  Observe first that for any $f,g\in SL^1(G)$,
\begin{equation}\label{eq:previo_minkowski_1}
    B_{(\lambda\otimes \mu)*\mul}(f,g)(x)=\iint_{\widehat{G}^2}  \esc{\zeta+\gamma, x} B_{\mul} (M_{-\zeta} f, M_{-\gamma} g)(x)\, {\dd \lambda(\zeta)}\, {\dd \mu(\gamma)},
\end{equation}
where $M_{\zeta} f(x)=\esc{\zeta, x} f(x)$.
If $X$ is Banach, the result follows by Minkowski's integral inequality. So, it remains to prove the case when $X$ is quasi-Banach and $p<1$. 

Assume first that there exists a compact set $\mathcal{K}$ such that $\lambda$ and $\mu$ are supported in $\mathcal{K}$. By the Lemma \ref{lem:technic_1}, there
exists a sequence of symmetric relatively compact open neighbourhood $\{U_n\}_n$ of $e_{\widehat{G}}$,
satisfying that $U_n+U_n\subset U_{n-1}$ and that for every $n\geq 1$ and $\zeta,\zeta^\prime\in U_n$,
\[
    \sup_{x\in G,\gamma\in \widehat{G}} \abs{F_{x}(\zeta,\gamma)-F_{x}(\zeta^\prime,\gamma)}<\frac 1 n.
\]
Since $\mathcal{K}$ is compact, there exists $N_n\in \N$, $\zeta_1,\ldots,
\zeta_{N_n}\in \mathcal{K}$ such that
\[
    \mathcal{K}\subset \bigcup_{j=1}^{N_n} U_{n}+\zeta_j.
\]
If we define, for $j=2,\ldots, N_n$,
\[
    \Omega_n^j=\brkt{U_n+\zeta_{j}}\setminus \Omega_n^{j-1},\quad \Omega_n^1=U_n+\zeta_1,
\]
we obtain a disjoint covering of $\mathcal{K}$, $\mathcal{K}\subset \biguplus _{j=1}^{N_n^\prime} \Omega_n^{j}$, with $N_n'\leq N_n$ such that
\[
     \sup_{\zeta\in \Omega_n^j} \sup_{x\in G, \gamma\in \widehat{G}} \abs{F_{x}(\zeta,\gamma)-F_{x}(\zeta_j,\gamma)}\leq \frac 1 n.
\]
By (\ref{eq:previo_minkowski_1}), it follows that
\begin{equation}\label{eq:desigualdad}
    \abs{B_{(\lambda\otimes \mu)*\mul}(f,g)(x)}\leq \iint_{\widehat{G}^2}\abs{F_x(\zeta,\gamma)}\; {\dd \abs{\lambda}(\zeta)}\, {\dd \abs{\mu}(\gamma)}.
\end{equation}
For any $n$, the inner integral in the right hand side can be bounded by
\[
    \begin{split}
    \int_{\widehat{G}} \abs{F_x(\zeta,\gamma)}\; {\dd \abs{\lambda}(\zeta)}&=\sum_{j=1}^{N_n'}  \int_{\Omega_n^j} \abs{F_x(\zeta,\gamma)}\; {\dd \abs{\lambda}(\zeta)}\\
    &\leq \frac{1}{n} \sum_{j=1}^{N_n'} \int_{\Omega_n^j}{\dd \abs{\lambda}(\zeta)}+\sum_{j=1}^{N_n'} \abs{F_{x}(\zeta_j,\gamma)}\int_{\Omega_n^j}{\dd \abs{\lambda}(\zeta)}\\
    &= \frac{\norm{\lambda}_{M(\widehat{G})}}{n}+\sum_{j=1}^{N_n'} \abs{F_{x}(\zeta_j,\gamma)} a_{n}^j,
    \end{split}
\]
where $a_{n}^j:=\int_{\Omega_n^j}{\dd \abs{\lambda}(\zeta)}\, {\dd \zeta}$. Then \eqref{eq:desigualdad} yields
\begin{equation}\label{eq:technic_12}
     \abs{B_{(\phi\otimes \psi)*\mul}(f,g)(x)}\leq 
     \frac{\norm{\lambda}_{M(\widehat{G})}\norm{\mu}_{M(\widehat{G})}}{n}+
     \sum_{j=1}^{N_n'} a_{n}^j \int_{\widehat{G}}\abs{F_{x}(\zeta_j,\gamma)}{\dd \abs{\mu}(\gamma)}.
\end{equation}
Repeating the argument with each integral appearing on the right hand side, we
can find a family of disjoint subsets $\{\Upsilon_n^k\}_{k=1}^{M_n}$, 
and a family $\{\gamma_k\}_{k=1}^{M_n}\subset \mathcal{K}$ satisfying
that $\mathcal{K}\subset \uplus \Upsilon_n^k$, and that
\[
    \sup_{\zeta \in \Upsilon_n^k}\sup_{\zeta\in \widehat{G}, x\in G} \abs{F_{x}(\zeta,\gamma)-F_{x}(\zeta,\gamma_k)}\leq \frac{1}{n}.
\]
In this way, if we define $b_{n}^k=\int_{\Upsilon_n^k}{\dd \abs{\mu}(\gamma)}$,
the sum in \eqref{eq:technic_12} is bounded by
\[
    \frac{\norm{\mu}_{M(\widehat{G})}}{n}\sum_{j=1}^{N_n} a_n^j +  \sum_{j=1}^{N_n'}\sum_{k=1}^{M_n} a_n^j b_{n}^k \abs{F_{x}(\zeta_j,\gamma_k)}.
\]
Thus, using that $\sum_{j=1}^{N_n} a_n^j=\norm{\lambda}_{M(\widehat{G})}$, we have
\[
   \iint_{\widehat{G}^2}\abs{F_x(\zeta,\gamma)}\; {\dd \abs{\lambda}(\zeta)}\, {\dd \abs{\mu}(\gamma)}\leq \frac{2 \norm{\lambda}_{M(\widehat{G})}\norm{\mu}_{M(\widehat{G})}}{n}+
    \sum_{j=1}^{N_n}\sum_{k=1}^{M_n} a_n^j b_{n}^k \abs{F_{x}(\zeta_j,\gamma_k)}.
\]
Cauchy-Schwarz inequality yields
\[
    \begin{split}
    \sum_{j=1}^{N_n} &\sum_{k=1}^{M_n} a_n^j b_{n}^k \abs{F_{x}(\zeta_j,\gamma_k)}\leq\\
    &\leq
    \sqrt{\norm{\lambda}_{M(\widehat{G})}\norm{\mu}_{M(\widehat{G})}} \brkt{\sum_{j=1}^{N_n}\sum_{k=1}^{M_n}
    \abs{B_{\mul}(\sqrt{a_n^j}M_{-\zeta_j} f, \sqrt{b_n^k }M_{-\gamma_k} g)(x)}^2}^{1/2}.
    \end{split}
\]
Since $B_\mul$ is a bounded bilinear operator, Lemma \ref{lem:MZ} implies
\[
    \begin{split}
    &\norm{\brkt{\sum_{j=1}^{N_n}\sum_{k=1}^{M_n} \abs{B_{\mul}(\sqrt{a_n^j}M_{-\zeta_j} f, \sqrt{b_n^k} M_{-\gamma_k} g)(x)}^2}^{1/2}}_{X}\leq\\
    &\leq \mathfrak{d}\norm{\mul} \norm{\brkt{\sum_j \abs{\sqrt{a_n^j}M_{-\zeta_j} f(x)}^2}^{1/2}}_{X_1} \norm{\brkt{\sum_k \abs{\sqrt{b_n^k}M_{-\gamma_k} g(x)}^2}^{1/2}}_{X_2}\\
    &= \mathfrak{d} \norm{\mul}_{\BM{X_{1},X_{2}}{X}{G}} \sqrt{\norm{\lambda}_{M(\widehat{G})}\norm{\mu}_{M(\widehat{G})}} \norm{f}_{X_1}\norm{g}_{X_2}.
    \end{split}
\]
So, for any compact set $\mathcal{R}\subset G$, and any $n\geq 1$, \eqref{eq:technic_12} yields
\begin{equation}
	\begin{split}
     \big\Vert B_{(\lambda\otimes \mu)*\mul}&(f,g) \chi_{\mathcal{R}}\big\Vert_X \leq 
     \norm{\chi_{\mathcal{R}}(x)\int_{\widehat{G}^2} \abs{F_x(u,\gamma)}\; {\dd \abs{\lambda}(u)}\, {\dd \abs{\mu}(\gamma)}}_X\\
     &\frac{2 \norm{\lambda}_{M(\widehat{G})}\norm{\mu}_{M(\widehat{G})} \norm{\chi_{\mathcal{R}}}_X}{n}+\\
     &+\mathfrak{d} \norm{\mul}_{\BM{X_{1},X_{2}}{X}{G}} {\norm{\lambda}_{M(\widehat{G})}\norm{\mu}_{M(\widehat{G})}} \norm{f}_{X_1}\norm{g}_{X_2}.
\end{split}
\end{equation}
Hence, taking first limit in $n\to \infty$ we have that for any compact set $\mathcal{R}\subset G$
\[
	 \big\Vert B_{(\lambda\otimes \mu)*\mul}(f,g) \chi_{\mathcal{R}}\big\Vert_X\leq \mathfrak{d} \norm{\mul}_{\BM{X_{1},X_{2}}{X}{G}} {\norm{\lambda}_{M(\widehat{G})}\norm{\mu}_{M(\widehat{G})}} \norm{f}_{X_1}\norm{g}_{X_2}.
\]
Taking a family of compact sets $\mathcal{R}_\uparrow G$ and using the monotonicity of the norm the result follows. 

For general case, consider an increasing sequence on compact sets  $\mathcal{K}_n\uparrow \widehat{G}$ . Monotone convergence implies
\[
    \abs{B_{(\lambda\otimes \mu)*\mul}(f,g)(x)}\leq \lim_{n} \iint_{\mathcal{K}_n\otimes \mathcal{K}_n}  \abs{F_x(\zeta,\gamma)}\; {\dd \abs{\lambda}(\zeta)}\, {\dd \abs{\mu}(\gamma)}.
\]
Using Fatou's property of $X$ and arguing as before, we obtain that for any compact set $\mathcal{R}\subset G$,
\[
	\begin{split}
    \norm{B_{(\phi\otimes \psi)*\mul}(f,g)\chi_\mathcal{R}}_{X}&\leq \mathfrak{d} \norm{\mul}_{\BM{X_{1},X_{2}}{X}{G}} \left(\liminf_n \int_{\mathcal{K}_n}\dd \abs{\lambda}\int_{\mathcal{K}_n}\dd \abs{\mu}\right) \norm{f}_{X_1}\norm{g}_{X_2},\\
    &\leq \mathfrak{d} \norm{\mul}_{\BM{X_{1},X_{2}}{X}{G}}{\norm{\lambda}_{M(\widehat{G})}\norm{\mu}_{M(\widehat{G})}} \norm{f}_{X_1}\norm{g}_{X_2}.
    \end{split}
\]
Arguing as before, the monotone convergence theorem yields the result. 
\end{proof}

Having proved the previous result we are now in a position to continue the proof of Theorem \ref{thm:Main}. So we need to give the countable family of multipliers $\{\mul_j\}_j$ satisfying$ (P_1)$-$(P_4)$. To this end, let consider $\varphi_j\in \mathcal{C}_c(G)$ such that
    \begin{enumerate}
        \item[($I_1$)]  For every $j\geq 0$, $\widehat{\varphi_j}\geq 0$;
        \item[($I_2$)] For every $j\geq 0$, $\int_{\widehat{G}} \widehat{\varphi_j}=1$;
        \item[($I_3$)] For every relatively compact open set $\mathcal{K}\subset \widehat{G}$ such that $e_{\widehat{G}}\in \mathcal{K}$,
        \[
            \lim_{j} \int_{\xi\not\in \mathcal{K}} \widehat{\varphi_j}=0.
        \]
    \end{enumerate}
    In other words, $\{\widehat{\varphi}_j\}_j$ is an approximate identity for $L^1(\widehat{G})$, which existence is ensured by \cite{MR0336233}*{Lemma 3.4}.
    Consider $\Phi_j=\varphi_j\otimes \varphi_j\in\mathcal{C}_c(G^2)$. It is easy to see that $\{\widehat{\Phi}_j\}_j$ is an approximate identity for $L^1(\widehat{G}^2)$.
    
        Consider $h_j\in \mathcal{C}_c(G)$ such that $0\leq h_j\leq 1$, $\int h_j=1$ and such that, for any $\xi\in \widehat{G}$ $\lim_j \widehat{h_j}(\xi)=1$.
    Define
    \begin{equation}\label{eq:mjs}
        \mul_j=\widehat{\left(h_j\otimes h_j\right)} \left(\widehat{(\varphi_j\otimes \varphi_j)}*\mul\right)
    \end{equation}
A similar argument to \cite{MR0336233}*{Lemma 3.5} for the group $G\otimes G$, implies properties ($P_1$) and ($P_3$) for $\{\mul_j\}_j$. On the other hand, since $X_1$ and $X_2$ are RI BFS, Minkowski integral inequality yields that, for any $j$, $\widehat{h_j}\in \mathcal{M}_{X_k}(G)$ and $\norm{h_j}_{\mathcal{M}_{X_k}(G)}\leq 1$ for $k=1,2$. Thus, Proposition \ref{prop:clave} yields that the sequence $\{\mul_j\}_j$ satisfies ($P_4$). 

In order to finish the proof of Theorem \ref{thm:Main} we need to prove ($P_2$) and ($P_2^\prime$). We are going first to recall the concept of normalized function \cite{MR0481928}*{Chapter 3}.

\begin{defn}\label{def:normalized}We say that $\mul\in L^\infty(\widehat{G}^2)$ is a normalized function (with respect to $\Phi_j$) if,
for any $\xi,\eta\in \widehat{G}$,
\[
    \lim_j \mul*\Phi_j(\xi,\eta)=\mul(\xi,\eta).
\]
\end{defn}

It follows from properties $(I_1)$, $(I_3)$, $(I_3)$ above, that if $(\xi,\eta)\in \widehat{G}^2$ is a continuity point of $\mul$, $ \lim_j \mul*\Phi_j(\xi,\eta)=\mul(\xi,\eta)$. That is, if $\mul\in \mathcal{C}_b(\widehat{G}^2)$, then it is a normalized function (with respect to $\{\widehat{\varphi_j}\otimes\widehat{\varphi_j}\}$). Above all, if $\mul\in \mathcal{C}_b(\widehat{G})$, then the sequence $\set{\mul_j}_j$ given in \eqref{eq:mjs} satisfies ($P_2^\prime$). 

\begin{rem}\label{rem:normalizedHT} If $\mul(\xi,\eta)=M(\eta-\xi)$ where $M\in L^\infty(\widehat{G})$, then it is easy to see that
\[
    \mul*(\phi\otimes\psi)(\xi,\eta)=M*_1 (\psi*_1 \widetilde{\phi})(\eta-\xi).
\]
where $*_1$ indicates the convolution for functions in $\widehat{G}$
and $\widetilde{\phi}(z)=\phi(-z)$. Therefore, if $M$ is a normalized function on $\widehat{G}$ with respect to $\{\varphi_j\}$, so it is $\mul$  on $\widehat{G}^2$ with respect to $\{\varphi_j\otimes \varphi_j\}$.  That is the case, for instance, of the function $\mul(\xi,\eta)=-i\sign(\eta-\xi)$, which is the multiplier associated to the Bilinear Hilbert Transform.
\end{rem}

We have proved that $\set{\mul_j}_j$ defined in \eqref{eq:mjs} satisfies ($P_1$),($P_3$),($P_4$) and observe that, any partial sequence also does.  Then, for the general case, it suffices to ensure the a.e. convergence property for a partial sequence of   $\set{\mul_j}_j$. To this end, we need the following technical lemma.
\begin{lem} Let $\Gamma$ be a LCA group and let $\{\widehat{\Phi_j}\}_j$ be an approximate identity for $L^1\brkt{\Gamma}$ and let $\mathbf{b}\in L^\infty\brkt{\Gamma}$. Define $\mathbf{b}_j=\widehat{\Phi_j}*\mathbf{b}$. Then, there exist a partial sequence $\{\mathbf{b}_{j_k}\}_k$ such that 
\[
	\lim_k \mathbf{b}_{j_k}(\xi)=\mathbf{b}(\xi)\quad {\rm a.e.}\, \xi\in \Gamma.
\]
\end{lem}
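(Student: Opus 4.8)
The plan is to first upgrade the pointwise-defined convolutions to convergence in $L^1$ on every compact set, and then to harvest an almost everywhere convergent subsequence using the $\sigma$-compactness of $\Gamma$. Writing $k_j=\widehat{\Phi_j}$, the approximate identity hypotheses provide $\int_\Gamma k_j=1$, $\sup_j\norm{k_j}_{L^1(\Gamma)}<\infty$, and $\int_{y\notin V}\abs{k_j}\to 0$ for every neighbourhood $V$ of $e_\Gamma$. The genuine obstruction is that $\mathbf{b}\in L^\infty(\Gamma)$ need not lie in any $L^p(\Gamma)$ with $p<\infty$, so the standard norm convergence of approximate identities cannot be invoked globally; the remedy is to localise so that $\mathbf{b}$ becomes integrable on the relevant compact piece.

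First I would fix a compact set $K\subset\Gamma$ and a symmetric relatively compact neighbourhood $V_0$ of $e_\Gamma$, and set $K''=K-\overline{V_0}$ (compact) together with $\widetilde{\mathbf{b}}=\mathbf{b}\,\chi_{K''}\in L^1(\Gamma)$, which agrees with $\mathbf{b}$ on all of $K''\supseteq K$. Using $\int_\Gamma k_j=1$ I would write, for $\xi\in K$,
\[
    \mathbf{b}_j(\xi)-\mathbf{b}(\xi)=\int_{\Gamma} k_j(y)\,\brkt{\mathbf{b}(\xi-y)-\mathbf{b}(\xi)}\,\dd y,
\]
and split this integral over $\set{y\in V}$ and $\set{y\notin V}$ for a neighbourhood $V\subseteq V_0$ to be chosen. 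The far part is controlled uniformly in $\xi$ by $2\norm{\mathbf{b}}_\infty\int_{y\notin V}\abs{k_j}$, which tends to $0$ as $j\to\infty$ by the concentration property. For the near part, since $\xi\in K\subseteq K''$ and $\xi-y\in K-V\subseteq K''$ the integrand only sees $\widetilde{\mathbf{b}}$, so Fubini and translation invariance of Haar measure give
\[
    \int_K\abs{\int_{V}k_j(y)\brkt{\mathbf{b}(\xi-y)-\mathbf{b}(\xi)}\dd y}\dd\xi\leq \norm{k_j}_{L^1(\Gamma)}\,\sup_{y\in V}\norm{\tau_y\widetilde{\mathbf{b}}-\widetilde{\mathbf{b}}}_{L^1(\Gamma)}.
\]
By continuity of translation in $L^1(\Gamma)$ \cite{MR551496}*{(20.4)}, the supremum can be made smaller than any prescribed $\varepsilon$ by shrinking $V$ inside the fixed $V_0$; fixing such a $V$ first and only then letting $j\to\infty$ yields $\limsup_j\int_K\abs{\mathbf{b}_j-\mathbf{b}}\leq C\varepsilon$, where $C=\sup_j\norm{k_j}_{L^1(\Gamma)}$. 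As $\varepsilon$ is arbitrary, $\mathbf{b}_j\to\mathbf{b}$ in $L^1(K)$ for every compact $K$.

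Finally, since $\Gamma$ is $\sigma$-compact I would exhaust it by an increasing sequence of compact sets $K_n\uparrow\Gamma$. The local $L^1$ convergence on $K_1$ produces a subsequence converging almost everywhere on $K_1$; extracting successively on $K_2,K_3,\dots$ and passing to the diagonal gives a single subsequence $\set{\mathbf{b}_{j_k}}_k$ that converges to $\mathbf{b}$ almost everywhere on each $K_n$, hence a.e.\ on $\Gamma=\bigcup_n K_n$. The only delicate point is the localisation in the middle paragraph: the auxiliary function $\widetilde{\mathbf{b}}$ a priori depends on the neighbourhood through $K''$, and pinning $V_0$ down in advance (so that $\widetilde{\mathbf{b}}$ stays fixed while $V$ still varies) is precisely what decouples the translation estimate from the choice of $V$ and keeps the argument non-circular.
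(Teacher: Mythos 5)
Your proof is correct: the pointwise identity $\mathbf{b}_j(\xi)-\mathbf{b}(\xi)=\int k_j(y)(\mathbf{b}(\xi-y)-\mathbf{b}(\xi))\,\mathrm{d}y$ is legitimate since $k_j\in L^1$ and $\mathbf{b}\in L^\infty$, the inclusion $K-V\subseteq K''$ justifies replacing $\mathbf{b}$ by the integrable truncation $\widetilde{\mathbf{b}}$ in the near part, the far part is killed by the concentration property for fixed $V$, and the diagonal extraction over a compact exhaustion (available because the paper's standing assumptions make every LCA group in play $\sigma$-compact, in particular $\Gamma=\widehat{G}^2$ where the lemma is used) finishes the argument. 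The paper's own proof follows the same overall strategy — localize $\mathbf{b}$ so that it becomes integrable, use the mass concentration of $\{\widehat{\Phi_j}\}$ to control the far contribution, then extract an a.e.\ convergent subsequence inductively — but organizes the decomposition differently: it first disposes of compact $\Gamma$ (where $L^\infty\subset L^1$ gives norm convergence outright), and for non-compact $\Gamma$ it splits the \emph{function}, $\mathbf{b}=\mathbf{b}\chi_{H_{m(n)}}+\mathbf{b}\chi_{\Gamma\setminus H_{m(n)}}$, along a nested family of neighbourhoods with $H_n+H_n\subset H_{n+1}$, quoting the standard $L^1$-convergence of approximate identities for the truncated piece and using the geometric inclusion $H_n+(\Gamma\setminus H_{m(n)})\subset\Gamma\setminus H_n$ for the tail. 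You instead split the \emph{convolution integral} in the kernel variable into $y\in V$ and $y\notin V$, with the truncation $\widetilde{\mathbf{b}}=\mathbf{b}\chi_{K''}$ pinned down in advance by $V_0$, and you re-derive the local $L^1$ convergence directly from the $L^1$-continuity of translations. What your route buys is a single unified argument (no compact/non-compact dichotomy, no nested $H_n$ machinery); what the paper's route buys is that the approximate-identity theorem can be cited as a black box rather than reproved. Your closing observation about decoupling $V_0$ (which fixes $\widetilde{\mathbf{b}}$) from the shrinking $V$ is exactly the point where a careless version of this argument would be circular, and you handle it correctly.
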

\begin{proof}
Suppose first that $\Gamma$ is a compact group. Then $L^\infty(\Gamma)\subset L^1(\Gamma)$ and, since $\widehat{\Phi_j}$ is an approximate identity, $\lim_j \mathbf{b}_j=\mathbf{b}$ in the $L^1(\Gamma)$ norm.  In particular, there exits a partial sequence of $\{\mathbf{b}_j\}_j$, such that we have the desired a.e. convergence.

Suppose now that $\Gamma$ is a non-compact group. Let $H_n$ be a sequence of relatively compact, symmetric  open neighbourhoods of the identity element in $\Gamma$, such that $H_n\subset H_n+H_n\subset H_{n+1}$ and $\Gamma=\cup_n H_n$. Observe that this family $\{H_n\}_n$ satisfies that for any $n\geq 1$, there exists $m(n)>n$ such that $H_n+(\Gamma\setminus H_{m(n)})\subset (\Gamma\setminus H_n)$. Note that any partial sequence of $\{\Phi_j\}$ is also an approximate identity for $L^1(\Gamma)$. 

Then, fixed $n$, since $\mathbf{b}\chi_{H_{m(n)}}\in L^1(\Gamma)$, $\chi_{H_n}(\widehat{\Phi_j}*\mathbf{b}\chi_{H_{m(n)}})$ converges to $\mathbf{b}\chi_{H_n}$ in the $L^1$ norm, when $j$ tends to infinity. On the other hand, for any $\xi\in H_n$,
\[
	\abs{\widehat{\Phi_j}*\mathbf{b}\chi_{\Gamma\setminus H_{m(n)}}(\xi)}\leq  \norm{\mathbf{b}}_{\infty}\int_{H_n+\Gamma\setminus H_{m(n)}} \widehat{\Phi_j}(\eta)\, \dd \eta\leq 
	\norm{\mathbf{b}}_{\infty}\int_{\Gamma\setminus H_{n}} \widehat{\Phi_j}(\eta)\, \dd \eta,
\]
which converges to zero when $j$ tends to infinity. 

Then, by an induction argument we can construct a partial sequence $\{\mathbf{b}_{j_k}\}_k$ satisfying that, for any $n\geq 1$, there exists a set of measure zero $N_n$, such that, for any $\xi \in H_n\setminus N_n$, $\lim_n \mathbf{b}_{j_k}(\xi)=\mathbf{b}(\xi)$. A standard measure argument yields the desired result. 
\end{proof}

For a general $\mul\in L^\infty(\widehat{G}^2)$, the previous  lemma with $\Gamma=\widehat{G}^2$ provide us with a partial sequence $\{\widehat{\Phi_{j_k}}*\mul\}_k$ which satisfies ($P_2$).  In particular, $\{\mul_{j_k}\}_k$, which is a partial sequence of that given in \eqref{eq:mjs}, that we rename as the new $\{\mul_j\}_j$,  satisfies ($P_1$)-($P_4$). 

\section{Proof of Theorem \ref{thm:main theorem}}\label{sect:approx}
Lets consider the case $\mul\in \BM{p_1,p_2}{p}{\Gamma}$. The weak case is proved analogously, so we omit the details. We want to prove that 
 $\mul\circ (\pi\otimes \pi)\in
\BM{p_1,p_2}{p}{G}$ and
\[
    \norm{\mul\circ (\pi\otimes \pi)}_{\BM{p_1,p_2}{p}{G}}\leq \mathfrak{c} \norm{\mul}_{\BM{p_1,p_2}{p}{\Gamma}}.
\]

Assume first that there exists $K\in L^1_c(\Gamma^2)$ such that $\widehat{K}=\mul$. In this case, it is easy to see that the multiplier operator coincides with the operator given by
\[
    B_K(F,G)(x)=\iint_{\Gamma^2} F(x-y_1) G(x-y_2) K(y_1,y_2)\, {\dd y_1}\, {\dd y_2},
\]
which by assumption on $\mul$, is a bounded operator from $L^{p_1}(\Gamma)\times L^{p_2}(\Gamma)$ to $L^{p}(\Gamma)$ with bound $\norm{\mul}_{\wBM{p_1,p_2}{p}{\Gamma}}$.

Let $\widetilde{\pi}:\Gamma\to G$ be the dual homomorphism of
$\pi$ defined by
\[
    \esc{x,\widetilde{\pi}(z)}=\esc{\pi(x),z},\quad \forall x\in G,\, \forall z\in \Gamma,
\]
that, by \cite{MR551496}*{(24.38)} it is a continuous homomorphism, which induces a strongly continuous, measure preserving representation of $\Gamma$ in $L^{q}(G)$ for any $0<q<\infty$ given by
\[
	R_z f_1(x)=f_1(\widetilde{\pi}(z)+x).
\]   
This representation satisfies, for any $z_0,z_1,z_2\in \Gamma$,
\[
    R_{z_0} \left(R_{z_1} f_1 R_{z_2} f_2\right)=R_{z_0+z_1} f_1 \, R_{z_0+z_2} f_2,
\]
and that for any $z\in \Gamma$, $\norm{R_{z} f}_{L^{q}}=\norm{f}_{L^{q}}$ for $q=p,p_1,p_2$.

Consider the {\it Transferred operator} as in  \cite{MR1398100},  given by
\[
	 T_K(f_1,f_2)(x)=\iint_{\Gamma^2} K(z_1,z_2) R_{z_1}f_1(x)  R_{z_2} f_2(x) \; \dd z,
\]
 for $f_1,f_2\in SL^1(G)$. Then \cite{MR1398100}*{Theorem 1} ( \cite{MR1398100}*{Theorem 2} for the weak case) yields that $T_K$ can be extended to a bounded operator $L^{p_1}(G)\times L^{p_2}(G)$ to $L^p(G)$ with a bound no larger than $\norm{\mul}_{\BM{p_1,p_2}{p}{\Gamma}}$. But observe that it holds that 
\begin{eqnarray*}
    \lefteqn{{T}_K(f_1,f_2)(x)=\iint_{\Gamma^2} K(z_1,z_2) R_{z_1}^1f_1(x) R_{z_2}^2f_2(x)\, {\dd z_1}\, {\dd z_2}}\\
    &=&\iint_{\widehat{G}^2} \widehat{f_1}(\xi)\widehat{f_2}(\eta) \iint_{\Gamma^2} K(z_1,z_2)\esc{\xi,\widetilde{\pi}(z_1) +x}\esc{\eta,\widetilde{\pi}(z_2)+x}\, {\dd z_1}\, {\dd \gamma_2}\, {\dd \xi} {\dd \eta}\\
    &=&\iint_{\widehat{G}^2} \widehat{f_1}(\xi)\widehat{f_2}(\eta)\widehat{K}(\pi(\xi),\pi(\eta))\esc{\eta+\xi, x} {\dd \xi}\, {\dd \eta}\\
    &=&B_{\mul\circ (\pi\otimes \pi)}(f_1,f_2)(x),
\end{eqnarray*}
which yields  
\begin{equation}
	\label{eq:cpt_support} \norm{\mul\circ (\pi\otimes \pi)}_{\BM{p_1,p_2}{p}{G}}\leq \norm{\mul}_{\wBM{p_1,p_2}{p}{\Gamma}},
\end{equation}
for $\mul\in \widehat{L^{1}_c(\Gamma^2)}$.

Lets assume now that $\mul\in \mathcal{C}_b(\widehat{\Gamma}) \cap\BM{p_1,p_2}{p}{\Gamma}$.  Let $\{\mul_j\}_j$ be the sequence given by Theorem \ref{thm:Main}.  The Dominated convergence theorem, ($P_2^\prime$) and  ($P_3$) imply that,  for any $f_1,f_2\in SL^1(G)$,
\begin{equation}\label{eq:improvement}
    \begin{split}
       {B}_{\mul\circ (\pi\otimes \pi)}&(f_1,f_2)(x) =\iint_{\widehat{G}^2} \widehat{f_1}(\xi)\widehat{f_2}(\eta) \mul(\pi(\xi),\pi(\eta)) \esc{\xi+\eta,x}\, {\dd \xi} {\dd \eta}\\
        &=\lim_{j}\iint_{\widehat{G}^2} \widehat{f_1}(\xi)\widehat{f_2}(\eta) \mul_j(\pi(\xi),\pi(\eta)) \esc{\xi+\eta,x}\, {\dd \xi} {\dd \eta}.
    \end{split}
\end{equation}
Then, Fatou's lemma, ($P_1$) and \eqref{eq:cpt_support} yield
\[
    \begin{split}
    \norm{{B}_{\mul\circ (\pi\otimes \pi)}(f_1,f_2)}_{L^{p}(G)}&\leq \liminf_j \norm{{B}_{\mul_j\circ (\pi\otimes \pi)}(f_1,f_2)}_{L^{p}(G)}\\
    &\leq \liminf_j \norm{\mul_j}_{\BM{p_1,p_2}{p}{\Gamma}} \norm{f_1}_{L^{p_1}(G)}\norm{f_2}_{L^{p_2}(G)}.
    \end{split}
\]
Therefore, the results follows as by ($P_4$), 
\[
    \liminf_j \norm{\mul_j}_{\BM{p_1,p_2}{p}{\Gamma}}\leq \mathfrak{c} \norm{\mul}_{\BM{p_1,p_2}{p}{\Gamma}}.
\]
\begin{flushright}
\qedsymbol
\end{flushright}

\begin{rem}\label{eq:final_remark} Observe that in the proof of the previous theorem, \eqref{eq:improvement} holds, and also the statement of Theorem \ref{thm:main theorem} does, if we can ensure that a.e. $(\xi,\eta)\in G^2$, the limit  $\lim_j \mul_j(\pi(\xi),\pi(\eta))=\mul(\pi(\xi),\pi(\eta))$ holds.  Hence, by Remark \ref{rem: normalized}, the result holds if $\mul$ is normalized or it is continuous on the image of $G^2$ by $\pi\otimes \pi$. 
\end{rem}

\section{Application and consequences}\label{sect:Application and consequences}

\subsection{Applications of Theorem \ref{thm:main theorem}}
In this section we will restrict our attention to indices $p_1,p_2,p$ satisfying $1\leq p_1,p_2<\infty$, $0<p<\infty$ such that 
\begin{equation}\label{eq:relacion}
    \frac{1}{p_1}+\frac{1}{p_2}=\frac{1}{p}.
\end{equation}

\subsubsection{K. de Leeuw's restriction type results}
Here we show how our Theorem \ref{thm:main theorem} allows us to produce de Leeuw's type bilinear results.  For the sake of brevity, we restrict our results only the strong case, but it has to be kept in mind that the corresponding weak results also hold.  

We shall start with an abstract version of of D. Fan and S. Sato's \cite{MR1808390}*{Theorem 3} to LCA groups. 
Let $G$ be a LCA group and let $H$ be a closed subgroup of $G$. Consider  $\Gamma = G/H$. Recall that the dual group of $\Gamma$ can be identified as $$\widehat{\Gamma}= H^{\perp}:=\set{\xi\in \widehat{G}:\, \forall g\in H\, \esc{\xi,g}=1}.$$ Letting be $\pi$ is the canonical inclusion of $H^{\perp}\hookrightarrow \widehat{G}$, and $\Pi$ the canonical projection from $\widehat{G}\to \widehat{H}$, Theorem \ref{thm:main theorem} yields the following abstract result.
\begin{thm}\label{thm:DeLeeuw} If $G$ is a LCA groups and $H$ is a closed subgroup. Then, 
\begin{enumerate}
	\item  If $\mul\in \BM{p_1,p_2}{p}{G}\cap \mathcal{C}_b(\widehat{G}^2)$ then $\mul(\pi\otimes\pi) \in \BM{p_1,p_2}{p}{G/H}$ and
	\[
		\norm{\mul(\pi\otimes \pi)}_{\BM{p_1,p_2}{p}{G/H}}\leq \mathfrak{c}\norm{\mul}_{\BM{p_1,p_2}{p}{G}}.
	\]
		\item\label{thm:deLeeuw_2} If $\mul\in \BM{p_1,p_2}{p}{H}\cap \mathcal{C}_b(\widehat{H}^2)$ then $\mul(\Pi\otimes\Pi) \in \BM{p_1,p_2}{p}{G}$ and
	\[
		\norm{\mul(\Pi\otimes \Pi)}_{\BM{p_1,p_2}{p}{G}}\leq \mathfrak{c}\norm{\mul}_{\BM{p_1,p_2}{p}{H}}.
	\]
\end{enumerate}
\end{thm}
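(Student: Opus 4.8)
The plan is to derive both assertions as direct instances of the Homomorphism Theorem \ref{thm:main theorem}, the only work being to identify the correct homomorphism $\pi:\widehat{\Gamma}\to\widehat{G}$ (or $\widehat{H}$) in each case and to verify the continuity hypothesis on the multiplier so that the theorem applies. Throughout I would use the standard duality dictionary for a closed subgroup $H\leq G$: the dual of the quotient $\Gamma=G/H$ is identified with the annihilator $H^{\perp}\leq\widehat{G}$, and the dual of $H$ is identified with the quotient $\widehat{G}/H^{\perp}$, with the natural projection $\Pi:\widehat{G}\to\widehat{H}\cong\widehat{G}/H^{\perp}$ (see \cite{MR551496}*{(23.25), (24.11)}).

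\emph{Part (1).} Here I take the map $\pi$ to be the canonical inclusion $H^{\perp}\hookrightarrow\widehat{G}$, which is visibly a continuous group homomorphism. Since $\widehat{G/H}=H^{\perp}$, the composition $\mul\circ(\pi\otimes\pi)$ is just the restriction of $\mul$ to $H^{\perp}\times H^{\perp}$, viewed as a function on $\widehat{G/H}^{2}$. The hypothesis $\mul\in\mathcal{C}_b(\widehat{G}^2)$ guarantees that $\mul$ is continuous, in particular on the image of $H^{\perp}\times H^{\perp}$ under $\pi\otimes\pi$, so by Remark \ref{rem: normalized} (equivalently Remark \ref{eq:final_remark}) the continuity requirement in Theorem \ref{thm:main theorem} is met. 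Applying Theorem \ref{thm:main theorem}(1) with $\Gamma$ there equal to $G/H$ and $G$ there equal to $G$ yields $\mul(\pi\otimes\pi)\in\BM{p_1,p_2}{p}{G/H}$ with the stated bound by $\mathfrak{c}\norm{\mul}_{\BM{p_1,p_2}{p}{G}}$.

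\emph{Part (2).} Here I take $\pi$ to be the canonical projection $\Pi:\widehat{G}\to\widehat{H}\cong\widehat{G}/H^{\perp}$, which is again a continuous group homomorphism. To invoke the theorem I reverse the roles: I apply Theorem \ref{thm:main theorem} with the group called $\Gamma$ there being $H$ (so that $\widehat{\Gamma}=\widehat{H}$ and the given multiplier $\mul$ lives on $\widehat{H}^2$), with the group called $G$ there being $G$, and with the homomorphism $\Pi:\widehat{G}\to\widehat{H}$. The composition $\mul\circ(\Pi\otimes\Pi)$ is then a bounded function on $\widehat{G}^2$, and since $\mul\in\mathcal{C}_b(\widehat{H}^2)$ is continuous, its pullback under the continuous map $\Pi\otimes\Pi$ is continuous on $\widehat{G}^2$; thus the hypotheses of Theorem \ref{thm:main theorem} hold and its conclusion gives $\mul(\Pi\otimes\Pi)\in\BM{p_1,p_2}{p}{G}$ with the asserted bound.

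The content of the proof is therefore almost entirely bookkeeping: matching the abstract groups $G,\Gamma$ of Theorem \ref{thm:main theorem} to the concrete pair $(G,G/H)$ in (1) and to $(H,G)$ in (2), and checking that the relevant map is the correct dual homomorphism. The only point requiring genuine care — and what I would regard as the main (if modest) obstacle — is the continuity bookkeeping: one must confirm that the image of the inclusion or projection lands where $\mul$ is continuous, so that Remark \ref{rem: normalized}/Remark \ref{eq:final_remark} licenses the use of the theorem without any normalisation hypothesis. Once the duality identifications $\widehat{G/H}=H^{\perp}$ and $\widehat{H}=\widehat{G}/H^{\perp}$ are in hand, both conclusions and their norm estimates follow immediately from Theorem \ref{thm:main theorem}.
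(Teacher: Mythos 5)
Your proposal is correct and is essentially the paper's own proof: identify $\widehat{G/H}$ with $H^{\perp}$ and $\widehat{H}$ with $\widehat{G}/H^{\perp}$, take $\pi$ the canonical inclusion $H^{\perp}\hookrightarrow\widehat{G}$ and $\Pi$ the canonical projection $\widehat{G}\to\widehat{H}$, and apply Theorem \ref{thm:main theorem}, the continuity hypothesis $\mul\in\mathcal{C}_b$ making Remark \ref{rem: normalized} applicable. One bookkeeping slip to fix: in Part (1) the roles must be ``$\Gamma$ there equal to $G$ and $G$ there equal to $G/H$'' (in the theorem the multiplier lives on $\widehat{\Gamma}$ and the pullback acts on the theorem's $G$), which is what your chosen homomorphism and stated conclusion already reflect, so the error is only in the labelling sentence, not in the argument.
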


In the particular case $G=\R^d$, $H=\Z^d$, identifying $\T^d$ with $[0,1)^d$, if we consider $\pi$ to be the canonical projection $\pi(\xi)=(\xi_1-[\xi_1],\ldots,\xi_d-[\xi_d])$, where $[t]$ denotes the integer part of $t$, the previous result implies the following.

\begin{cor}  Let $\mul\in \mathcal{C}_b(\T^{2d})\cap \BM{p_1,p_2}{p}{\Z^d}$. If we define
$\widetilde{\mul}(\xi,\eta)=\mul(\xi_1-[\xi_1],\eta_1-[\eta_1],\ldots,\xi_d-[\xi_d],\eta_d-[\eta_d])$,  then
$
    \widetilde{\mul}\in \BM{p_1,p_2}{p}{\R^d}
$
and
\[
    \norm{\widetilde{\mul}}_{\BM{p_1,p_2}{p}{\R^d}}\leq \mathfrak{c} \norm{\mul}_{\BM{p_1,p_2}{p}{\Z^d}}.
\]
\end{cor}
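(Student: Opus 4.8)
The plan is to read the statement as the concrete instance of Theorem~\ref{thm:DeLeeuw}(\ref{thm:deLeeuw_2}) obtained by choosing $G=\R^d$ and $H=\Z^d$. Since $\Z^d$ is discrete it is a closed subgroup of $\R^d$, so that theorem applies; moreover the indices satisfy $1\leq p_1,p_2<\infty$, $0<p<\infty$ with $\frac1{p_1}+\frac1{p_2}=\frac1p$ as fixed in \eqref{eq:relacion}, so no constraint is violated.

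First I would fix the duality bookkeeping. Under the usual self-pairing $\esc{\xi,x}=e^{2\pi i\,\xi\cdot x}$ of $\R^d$, the annihilator of $H=\Z^d$ is
\[
    H^{\perp}=\set{\xi\in\R^d:\ \esc{\xi,n}=1\ \forall n\in\Z^d}=\Z^d,
\]
so that $\widehat{\Z^d}=\widehat{\R^d}/H^{\perp}=\R^d/\Z^d\cong\T^d$ and the canonical projection $\Pi\colon\widehat{\R^d}\to\widehat{\Z^d}$ is reduction modulo $\Z^d$. Identifying $\T^d$ with the fundamental domain $[0,1)^d$, this $\Pi$ is exactly the coordinatewise fractional part $\xi\mapsto(\xi_1-[\xi_1],\ldots,\xi_d-[\xi_d])$, i.e. the map $\pi$ appearing in the statement.

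With these identifications $\widehat{H}^2=\T^{2d}$, so the hypotheses $\mul\in\mathcal{C}_b(\T^{2d})$ and $\mul\in\BM{p_1,p_2}{p}{\Z^d}$ are precisely those required of the subgroup multiplier in Theorem~\ref{thm:DeLeeuw}(\ref{thm:deLeeuw_2}). Evaluating, $\mul(\Pi\otimes\Pi)(\xi,\eta)=\mul(\Pi(\xi),\Pi(\eta))$ is $\mul$ applied to the fractional parts of the coordinates of $\xi$ and $\eta$, which, after listing the $2d$ torus coordinates (whose ordering is immaterial to the multiplier norm), is $\widetilde{\mul}(\xi,\eta)$. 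Theorem~\ref{thm:DeLeeuw}(\ref{thm:deLeeuw_2}) then yields $\widetilde{\mul}\in\BM{p_1,p_2}{p}{\R^d}$ together with the bound $\norm{\widetilde{\mul}}_{\BM{p_1,p_2}{p}{\R^d}}\leq\mathfrak{c}\,\norm{\mul}_{\BM{p_1,p_2}{p}{\Z^d}}$, which is the claim.

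There is no analytic obstacle here, as all the work is carried by Theorem~\ref{thm:DeLeeuw} (ultimately by Theorem~\ref{thm:main theorem}). The only point deserving care is that continuity is demanded of $\mul$ on $\widehat{H}^2=\T^{2d}$, not of the composition: indeed $\widetilde{\mul}$ is genuinely discontinuous across the integer hyperplanes, yet this is harmless, since the continuity (or normalization) hypothesis in Theorem~\ref{thm:main theorem} concerns only the multiplier on the target dual group. The remaining details are the routine verifications that $(\Z^d)^{\perp}=\Z^d$ and that the canonical projection coincides with the fractional-part map, both recorded above.
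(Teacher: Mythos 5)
Your proof is correct and is essentially the paper's own argument: the paper obtains this corollary precisely by specializing Theorem~\ref{thm:DeLeeuw}(\ref{thm:deLeeuw_2}) to $G=\R^d$, $H=\Z^d$, identifying $\T^d$ with $[0,1)^d$ so that the canonical projection $\Pi\colon\widehat{\R^d}\to\widehat{\Z^d}$ becomes the coordinatewise fractional-part map. Your additional bookkeeping (the computation $(\Z^d)^{\perp}=\Z^d$, the identification $\widehat{\Z^d}\cong\R^d/\Z^d$, and the observation that continuity is required only of $\mul$ on $\T^{2d}$ and not of the discontinuous composition $\widetilde{\mul}$) simply makes explicit what the paper leaves as routine.
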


Observe that in Theorem \ref{thm:main theorem}, the obtained bound does not depend on the homomorphism considered. This allows us to obtain a extension of D. Fan and S. Sato's \cite{MR1808390}*{Theorem 3}. 

\begin{cor}\label{cor:Fan_Sato} Let $\mul\in \mathcal{C}_b(\R^{2d})\cap \BM{p_1,p_2}{p}{\R^{d}}$.
For any ${\vec\epsilon}=(\epsilon_1,\ldots, \epsilon_d)\in \brkt{\R_+}^d$ let $\pi_{\vec\epsilon}:\Z^d\to \R^d$ be the anisotropic dilations $\pi_{\vec\epsilon}(n)=(\epsilon_1 n_1,\ldots, \epsilon_d n_d)$.  Then 
\[
    \sup_{{\vec\epsilon}\in \brkt{\R_+}^d}\norm{\mul_{\vec\epsilon}}_{\BM{p_1,p_2}{p}{\T^d}}\leq \mathfrak{c} \norm{\mul}_{\BM{p_1,p_2}{p}{\R^d}},
\]
where $\mul_{\vec\epsilon}(n,m)=\mul\brkt{\pi_{\vec\epsilon}(n),\pi_{\vec\epsilon}(m)}$, for $n,m\in \Z^d$.
\end{cor}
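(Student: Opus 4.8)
The plan is to read this statement as a direct specialisation of Theorem \ref{thm:main theorem}, the only additional point being that the bound produced there is uniform in the dilation parameter $\vec\epsilon$.

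First I would fix the groups. Taking $\Gamma=\R^d$ and $G=\T^d$, we have $\widehat{\Gamma}=\R^d$ and $\widehat{G}=\Z^d$. For each fixed $\vec\epsilon=(\epsilon_1,\ldots,\epsilon_d)\in\brkt{\R_+}^d$, the anisotropic dilation $\pi_{\vec\epsilon}(n)=(\epsilon_1 n_1,\ldots,\epsilon_d n_d)$ is additive, hence a continuous group homomorphism $\pi_{\vec\epsilon}:\Z^d\to\R^d$, that is, a homomorphism $\widehat{G}\to\widehat{\Gamma}$ of exactly the type required by Theorem \ref{thm:main theorem}. Moreover $\mul_{\vec\epsilon}=\mul\circ(\pi_{\vec\epsilon}\otimes\pi_{\vec\epsilon})$ by definition.

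Next I would verify the hypotheses and invoke the theorem. By assumption $\mul\in\mathcal{C}_b(\R^{2d})\cap\BM{p_1,p_2}{p}{\R^d}$, so $\mul$ is a bounded continuous symbol on $\widehat{\Gamma}^2$ that is a bilinear multiplier on $\Gamma$, and the indices obey \eqref{eq:relacion}. Applying Theorem \ref{thm:main theorem}(1) with the homomorphism $\pi_{\vec\epsilon}$ then gives $\mul_{\vec\epsilon}\in\BM{p_1,p_2}{p}{\T^d}$ together with
\[
    \norm{\mul_{\vec\epsilon}}_{\BM{p_1,p_2}{p}{\T^d}}\leq \mathfrak{c}\,\norm{\mul}_{\wBM{p_1,p_2}{p}{\R^d}}\leq \mathfrak{c}\,\norm{\mul}_{\BM{p_1,p_2}{p}{\R^d}},
\]
where the last inequality is the elementary embedding $\norm{\mul}_{\wBM{p_1,p_2}{p}{\R^d}}\leq\norm{\mul}_{\BM{p_1,p_2}{p}{\R^d}}$ coming from $L^p\hookrightarrow L^{p,\infty}$.

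Finally---and this is the whole point of the corollary---the right-hand side of the last display is entirely independent of $\vec\epsilon$, reflecting the remark preceding the statement that the bound in Theorem \ref{thm:main theorem} does not depend on the chosen homomorphism. Hence taking the supremum over $\vec\epsilon\in\brkt{\R_+}^d$ yields the asserted inequality. There is no substantial obstacle here: the only thing demanding care is the bookkeeping of the identifications $\widehat{\T^d}=\Z^d$, $\widehat{\R^d}=\R^d$, and the verification that each $\pi_{\vec\epsilon}$ is a homomorphism $\widehat{G}\to\widehat{\Gamma}$, so that the abstract theorem applies verbatim with a $\vec\epsilon$-independent constant.
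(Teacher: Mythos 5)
Your proof is correct and is exactly the argument the paper intends: the corollary is stated immediately after the observation that the bound in Theorem \ref{thm:main theorem} is independent of the homomorphism, so one applies that theorem with $\Gamma=\R^d$, $G=\T^d$, and the homomorphism $\pi_{\vec\epsilon}:\widehat{G}=\Z^d\to\widehat{\Gamma}=\R^d$, then takes the supremum over $\vec\epsilon$. Your extra step passing from the weak to the strong norm on the right-hand side is a harmless (and sensible) way to reconcile the corollary's statement with the norm appearing in Theorem \ref{thm:main theorem}(1).
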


\begin{cor}\label{cor:several}  Let $\mul\in \mathcal{C}_b(\R^{2n})\cap\BM{p_1,p_2}{p}{\R^n}$.
Let $A: \R^d\to \R^{n}$ be a linear map given by a matrix $A$.
Define for $\xi^\prime,\eta^{\prime}\in \R^{d}$
\[
    \widetilde{\mul}(\xi^\prime,\eta^\prime )=\mul(A\xi^\prime,A\eta^\prime ).
\]
Then
$
    \widetilde{\mul}\in \BM{p_1,p_2}{p}{\R^d}
$
and
\[
    \norm{\widetilde{\mul}}_{\BM{p_1,p_2}{p}{\R^d}}\leq \mathfrak{c} \norm{\mul}_{\BM{p_1,p_2}{p}{\R^n}}.
\]
\end{cor}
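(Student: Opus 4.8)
The plan is to recognise this corollary as a direct instance of part (1) of Theorem \ref{thm:main theorem}; the only real task is to choose the groups and the homomorphism correctly and to check that the composition $\mul\circ(\pi\otimes\pi)$ is exactly $\widetilde{\mul}$. First I would set $\Gamma=\R^{n}$ and $G=\R^{d}$. Since $\R^{k}$ is self-dual under the usual pairing, we identify $\widehat{\Gamma}$ with $\R^{n}$ and $\widehat{G}$ with $\R^{d}$, so that the given $\mul$ is a function on $\widehat{\Gamma}^{2}=\R^{2n}$ with $\mul\in\mathcal{C}_b(\widehat{\Gamma}^{2})\cap\BM{p_1,p_2}{p}{\Gamma}$, while the target multiplier $\widetilde{\mul}$ is a function on $\widehat{G}^{2}=\R^{2d}$.

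Next I would take $\pi:\widehat{G}\to\widehat{\Gamma}$ to be the linear map $A$ itself, regarded as a map $\R^{d}\to\R^{n}$. Being linear, $A$ is a continuous homomorphism between the additive dual groups, hence an admissible $\pi$ for Theorem \ref{thm:main theorem}. By the very definition of the composition,
\[
    \brkt{\mul\circ(\pi\otimes\pi)}(\xi^\prime,\eta^\prime)=\mul(A\xi^\prime,A\eta^\prime)=\widetilde{\mul}(\xi^\prime,\eta^\prime),\qquad \xi^\prime,\eta^\prime\in\R^{d},
\]
so the two multipliers coincide as functions on $\R^{2d}$.

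Finally, since by hypothesis $\mul\in\mathcal{C}_b(\R^{2n})$, the continuity assumption of Theorem \ref{thm:main theorem}(1) is satisfied, and applying that part gives $\widetilde{\mul}=\mul\circ(\pi\otimes\pi)\in\BM{p_1,p_2}{p}{\R^{d}}$ together with
\[
    \norm{\widetilde{\mul}}_{\BM{p_1,p_2}{p}{\R^{d}}}\leq \mathfrak{c}\,\norm{\mul}_{\BM{p_1,p_2}{p}{\R^{n}}},
\]
as claimed. I expect essentially no obstacle here, exactly as in Corollary \ref{cor:Fan_Sato}: the single point requiring a moment's care is the bookkeeping of the self-duality identifications, namely verifying that the matrix $A$ acting on the frequency variables is precisely the dual-group homomorphism $\pi$ that is fed into the theorem, and that the $\mathfrak{c}$ in the resulting bound is independent of $A$.
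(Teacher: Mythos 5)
Your proof is correct, but it takes a genuinely different (and more direct) route than the paper. You apply Theorem \ref{thm:main theorem}(1) in one step: with $\Gamma=\R^{n}$, $G=\R^{d}$, the usual self-duality identifications, and $\pi=A$ (a linear map is indeed a continuous homomorphism of the additive dual groups, so it is an admissible $\pi$), one has $\mul\circ(\pi\otimes\pi)=\widetilde{\mul}$, and the hypothesis $\mul\in\mathcal{C}_{b}(\R^{2n})$ supplies the continuity the theorem requires; since $\mathfrak{c}$ depends only on $p,p_{1},p_{2}$, the bound is automatically uniform in $A$. The paper instead deduces the corollary from its de Leeuw-type Theorem \ref{thm:DeLeeuw}: it sets $G=\R^{n}$, takes $H=A(\R^{d})$ (the image of $A$, a closed subgroup) and invokes part (2) of that theorem. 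That route is considerably less transparent: as literally stated, part (2) transfers a multiplier on the subgroup $H$ to a multiplier on the ambient group $\R^{n}$, whereas here the datum $\mul$ lives on $\R^{n}$ and the conclusion concerns $\R^{d}$; to complete it one must also restrict the symbol to the image of $A$ (part (1), applied with the spatial subgroup $\brkt{A(\R^{d})}^{\perp}$), identify $A(\R^{d})$ with a Euclidean group (Corollary \ref{cor:Isomorphism}), and, when $A$ is not injective, lift along the quotient $\R^{d}\to\R^{d}/\ker A$. Your single application of Theorem \ref{thm:main theorem} absorbs all of this bookkeeping at once, precisely because that theorem places no injectivity or surjectivity assumptions on $\pi$; it parallels the paper's own treatment of Corollary \ref{cor:Fan_Sato} and yields the same constant, so if anything it is the cleaner argument.
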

\begin{proof}  Let $G=\R^n$, let $H=\set{\xi\in\R^n: \xi=A x,\, x\in \R^d}$ be the image of $A$ and apply \eqref{thm:deLeeuw_2} in Theorem \ref{thm:DeLeeuw}.
\end{proof}
The previous result allows us to obtain a generalization of G. Diestel and L. Grafakos's  \cite{MR2301463}*{Proposition 2} for $p<1$ (and also its weak type counterpart), on the restriction to a lower dimension of a bilinear multiplier. 
\begin{cor}  Let, for $n\geq 2$, $\mul\in \mathcal{C}_b(\R^{2n})\cap\BM{p_1,p_2}{p}{\R^n}$. Let $d<n$. Consider $d_1=n-d$.
For any $\eta_1,\eta_2\in \R^{d_1}$, the function defined by
\[
    \widetilde{\mul}(\xi_1,\xi_2)=\mul(\xi_1,\eta_1,\xi_2,\eta_2), \quad\forall \xi_1,\xi_2\in \R^{d},
\]
satisfies that $\widetilde{\mul}\in \BM{p_1,p_2}{p}{\R^d}$ and
\[
    \norm{\widetilde{\mul}}_{\BM{p_1,p_2}{p}{\R^d}}\leq \mathfrak{c} \norm{\mul}_{\BM{p_1,p_2}{p}{\R^n}}.
\]
\end{cor}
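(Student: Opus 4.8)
The plan is to reduce this to Corollary \ref{cor:several} by absorbing the fixed frequencies $\eta_1,\eta_2$ into a modulation of the inputs. Writing the first copy of $\R^n$ as $\R^d\times\R^{d_1}$ with coordinates $(\xi_1,\eta_1)$ and the second likewise, the statement concerns the restriction of $\mul$ to the affine slices $\set{(\,\cdot\,,\eta_1)}$ and $\set{(\,\cdot\,,\eta_2)}$. Corollary \ref{cor:several} only restricts to the linear image $A(\R^d)$, that is, to a subspace through the origin; the one new ingredient needed is therefore to handle the translation of the multiplier by the constant vectors $(0,\eta_1)$ and $(0,\eta_2)$.

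First I would establish that the bilinear multiplier norm on $\R^n$ is invariant under frequency translations. Given $a,b\in\R^n$ and setting $\mul^\ast(\xi,\eta)=\mul(\xi+a,\eta+b)$, the substitution $\xi\mapsto\xi-a$, $\eta\mapsto\eta-b$ in the definition of $B_{\mul^\ast}$, together with $\widehat{M_{-a}f}(\xi)=\widehat{f}(\xi-a)$, gives for $f,g\in SL^1(\R^n)$
\[
    B_{\mul^\ast}(f,g)(x)=\overline{\esc{a+b,x}}\,B_{\mul}(M_{-a}f,M_{-b}g)(x).
\]
Since $\abs{\esc{a+b,x}}=1$ and the modulations $M_{-a},M_{-b}$ preserve $\abs{f}$ and hence the $L^{p_1}$ and $L^{p_2}$ norms, this forces $\abs{B_{\mul^\ast}(f,g)}=\abs{B_\mul(M_{-a}f,M_{-b}g)}$ pointwise, whence $\norm{\mul^\ast}_{\BM{p_1,p_2}{p}{\R^n}}=\norm{\mul}_{\BM{p_1,p_2}{p}{\R^n}}$; and $\mul^\ast\in\mathcal{C}_b(\R^{2n})$ because translation preserves continuity and boundedness.

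I would then take $a=(0,\eta_1)$, $b=(0,\eta_2)$ in $\R^d\times\R^{d_1}=\R^n$, so that $\mul^\ast\in\mathcal{C}_b(\R^{2n})\cap\BM{p_1,p_2}{p}{\R^n}$ with the same norm as $\mul$, and apply Corollary \ref{cor:several} to $\mul^\ast$ with $A:\R^d\to\R^n$ the inclusion $A\xi_1=(\xi_1,0)$. Since
\[
    \mul^\ast(A\xi_1,A\xi_2)=\mul\brkt{(\xi_1,0)+(0,\eta_1),(\xi_2,0)+(0,\eta_2)}=\mul\brkt{(\xi_1,\eta_1),(\xi_2,\eta_2)}=\widetilde{\mul}(\xi_1,\xi_2),
\]
Corollary \ref{cor:several} immediately gives $\widetilde{\mul}\in\BM{p_1,p_2}{p}{\R^d}$ and
\[
    \norm{\widetilde{\mul}}_{\BM{p_1,p_2}{p}{\R^d}}\leq\mathfrak{c}\,\norm{\mul^\ast}_{\BM{p_1,p_2}{p}{\R^n}}=\mathfrak{c}\,\norm{\mul}_{\BM{p_1,p_2}{p}{\R^n}}.
\]
The computation is otherwise routine bookkeeping of coordinates; the only genuinely new step is the translation-invariance of the multiplier norm, and even that is elementary, so I expect no substantial obstacle here.
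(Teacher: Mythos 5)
Your proposal is correct and follows essentially the same route as the paper: the paper also reduces to Corollary \ref{cor:several} by translating the multiplier by $\gamma_j=(0,\eta_j)$ and composing with the inclusion $A\xi=(\xi,0)$, asserting the translation-invariance of the multiplier norm as ``easy to see.'' Your modulation computation $B_{\mul^\ast}(f,g)=\overline{\esc{a+b,x}}\,B_{\mul}(M_{-a}f,M_{-b}g)$ simply supplies the detail the paper omits.
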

\begin{proof}
It is easy to see that if $\mul\in\BM{p_1,p_2}{p}{\R^n}$, for
any $\gamma_1,\gamma_2\in \R^{n}$, then the function $\mul_{\gamma_1,\gamma_2}(\gamma,\nu)=\mul(\gamma+\gamma_1,\nu+\gamma_2)\in\BM{p_1,p_2}{p}{\R^n}$ and
\[
    \norm{\mul_{\gamma_1,\gamma_2}}_{\BM{p_1,p_2}{p}{\R^n}}=\norm{\mul}_{\BM{p_1,p_2}{p}{\R^n}}.
\]
In particular, if we consider $\gamma_j=(0,\eta_j)$, for $j=1,2$ and we take the linear map $A:\R^d\to \R^n$ given by $A\xi=(\xi,0)$,
the result follows by the previous one applied to $\mul_{\gamma_1,\gamma_2}$ as
\[
    \widetilde{\mul}(\xi_1,\xi_2)=\mul_{\gamma_1,\gamma_2}(A\xi_1,A\xi_2).
\]
\end{proof}

We can also obtain the following two lifting results on multipliers. 
\begin{cor}  Let, $d>n\geq 1$ and $\mul\in \mathcal{C}_b(\R^{2n})\cap\BM{p_1,p_2}{p}{\R^n}$. Define, for any $(\xi_j,\eta_j) \in \R^{n}\times\R^{d-n}$ for $j=1,2$,
\[
    \widetilde{\mul}(\xi_1,\eta_1, \xi_2,\eta_2)=\mul(\xi_1,\xi_2).
\]
Then $\widetilde{\mul}\in \BM{p_1,p_2}{p}{\R^d}$ and
\[
    \norm{\widetilde{\mul}}_{\BM{p_1,p_2}{p}{\R^d}}\leq \mathfrak{c} \norm{\mul}_{\BM{p_1,p_2}{p}{\R^n}}.
\]
\begin{proof}
It suffices to consider the natural projection $A:\R^d\to \R^n$ such that maps any $(\xi,\eta)\in \R^{n}\times \R^{d-n}$ to $\xi$, and
apply Corollary \ref{cor:several} as
\[
     \widetilde{\mul}(\xi_1,\eta_1, \xi_2,\eta_2)=\mul (A(\xi_1,\eta_1), A(\xi_2,\eta_2))=\mul(\xi_1,\xi_2).
\]
\end{proof}
\end{cor}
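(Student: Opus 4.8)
The plan is to recognise $\widetilde{\mul}$ as the pullback of $\mul$ along a single linear map applied to each frequency slot, so that the statement reduces immediately to Corollary \ref{cor:several}.

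First I would introduce the coordinate projection $A\colon\R^d\to\R^n$ that discards the last $d-n$ variables, namely $A(\xi,\eta)=\xi$ for $(\xi,\eta)\in\R^n\times\R^{d-n}$. This is a linear map (given by the matrix $\begin{pmatrix}I_n&0\end{pmatrix}$), and with this choice a direct computation gives, for all $(\xi_j,\eta_j)\in\R^n\times\R^{d-n}$,
\[
    \mul\brkt{A(\xi_1,\eta_1),A(\xi_2,\eta_2)}=\mul(\xi_1,\xi_2)=\widetilde{\mul}(\xi_1,\eta_1,\xi_2,\eta_2).
\]
Thus $\widetilde{\mul}$ is exactly of the form $\mul(A\,\cdot\,,A\,\cdot\,)$ treated in Corollary \ref{cor:several}.

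Next I would verify the hypotheses of Corollary \ref{cor:several}. By assumption $\mul\in\BM{p_1,p_2}{p}{\R^n}$, and since $\mul\in\mathcal{C}_b(\R^{2n})$ while $A$ is continuous, the composition $\widetilde{\mul}$ lies in $\mathcal{C}_b(\R^{2d})$. Applying Corollary \ref{cor:several} with this $A$ then yields at once both $\widetilde{\mul}\in\BM{p_1,p_2}{p}{\R^d}$ and the bound
\[
    \norm{\widetilde{\mul}}_{\BM{p_1,p_2}{p}{\R^d}}\leq\mathfrak{c}\,\norm{\mul}_{\BM{p_1,p_2}{p}{\R^n}},
\]
as required.

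I do not expect any genuine obstacle here: all of the analytic content (transference, the approximation of Theorem \ref{thm:Main}, and the Marcinkiewicz--Zygmund estimate) has already been absorbed into Corollary \ref{cor:several}, and ultimately into Theorem \ref{thm:main theorem}. The only points deserving a moment's care are bookkeeping ones, namely choosing a projection (which \emph{adds} the $\eta$-variables to the lifted multiplier) rather than an injection, and confirming the composition identity displayed above; both are immediate, which is precisely why the statement is a corollary.
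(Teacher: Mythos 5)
Your proposal is correct and coincides with the paper's own proof: the paper likewise takes the coordinate projection $A(\xi,\eta)=\xi$, observes the identity $\widetilde{\mul}(\xi_1,\eta_1,\xi_2,\eta_2)=\mul\brkt{A(\xi_1,\eta_1),A(\xi_2,\eta_2)}$, and invokes Corollary \ref{cor:several}. No further comment is needed.
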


\begin{cor} Let $\mul\in \mathcal{C}_b(\R^{2})\cap\BM{p_1,p_2}{p}{\R}$ and $d\geq 1$. Fixed $y\in \R^d$, define, for any $(\xi,\eta) \in \R^{2d}$ 
\[
    \widetilde{\mul}_y(\xi,\eta)=\mul(\xi\cdot y,\eta\cdot y).
\]
Then $\widetilde{\mul}_y\in \BM{p_1,p_2}{p}{\R^d}$ and
\[
    \sup_{y\in \R^n}\norm{\widetilde{\mul}_y}_{\BM{p_1,p_2}{p}{\R^d}}\leq \mathfrak{c} \norm{\mul}_{\BM{p_1,p_2}{p}{\R}}.
\]
\begin{proof}
It suffices to consider the linear form given by the scalar product $A_y x=x\cdot y$ and 
apply Corollary \ref{cor:several}.
\end{proof}
\end{cor}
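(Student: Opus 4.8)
The plan is to recognise the scalar product with a fixed vector as a linear map and then invoke Corollary~\ref{cor:several} in the special case $n=1$. Indeed, for fixed $y\in\R^d$ the assignment $A_y\colon \R^d\to\R$, $A_y x = x\cdot y$, is a linear map (given by the $1\times d$ matrix whose entries are the coordinates of $y$), so that
\[
    \widetilde{\mul}_y(\xi,\eta)=\mul(\xi\cdot y,\eta\cdot y)=\mul(A_y\xi,A_y\eta)
\]
is exactly the composition appearing in Corollary~\ref{cor:several} with $n=1$ and $A=A_y$.

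First I would verify that the hypotheses of Corollary~\ref{cor:several} are met. This is immediate: by assumption $\mul\in\mathcal{C}_b(\R^2)\cap\BM{p_1,p_2}{p}{\R}$, which is precisely the requirement on the ``upstairs'' multiplier (with $n=1$, so that $\R^{2n}=\R^2$ and $\R^n=\R$). Applying Corollary~\ref{cor:several} then yields $\widetilde{\mul}_y\in\BM{p_1,p_2}{p}{\R^d}$ together with the bound $\norm{\widetilde{\mul}_y}_{\BM{p_1,p_2}{p}{\R^d}}\leq \mathfrak{c}\,\norm{\mul}_{\BM{p_1,p_2}{p}{\R}}$.

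Finally, to obtain the uniform estimate I would observe that the bound produced by Corollary~\ref{cor:several} does not depend on the linear map $A$; this independence is inherited from Theorem~\ref{thm:main theorem}, whose constant $\mathfrak{c}$ depends only on the indices $p,p_1,p_2$ and not on the homomorphism considered. Since the right-hand side $\mathfrak{c}\,\norm{\mul}_{\BM{p_1,p_2}{p}{\R}}$ is therefore the same for every $y$, taking the supremum over $y\in\R^d$ preserves the inequality and gives the claim. There is no substantial obstacle here; the only point genuinely worth noting is precisely this $A$-independence of the constant, which is exactly what makes the supremum over $y$ harmless.
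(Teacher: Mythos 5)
Your proposal is correct and follows exactly the paper's own argument: identify $A_y x = x\cdot y$ as the linear map $\R^d\to\R$ and apply Corollary \ref{cor:several} with $n=1$. The extra observation you make about the $y$-independence of the constant (inherited from the homomorphism-independence of $\mathfrak{c}$ in Theorem \ref{thm:main theorem}) is precisely what the paper relies on implicitly to justify the supremum.
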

%
%
\subsubsection{Bilinear Hilbert transform in groups with ordered dual}
In this section, following the spirit of \cite{MR930884}, we define a Generalised Hilbert transform on groups with ordered dual by using the original version in $\R$, and obtain its boundedness. 

To this end we shall assume that, $G$ is a LCA group such that $\widehat{G}$ has a measurable order $P$.  That is, there exists $P\subset \widehat{G}$ measurable satisfying $P+P=P$, $P\cap (-P)=\{0\}$; $P\cup (-P)=\widehat{G}$.  The group $G=\T$ is an example of such class of groups (see \cite{MR930884} and the references therein for more information on ordered groups). With $P$ we associate the function ${\rm \sign}_P$ given by 
\[
	{\rm \sign}_P(\xi)=\begin{cases}
	1 & {\rm if}\quad  \xi\in P\setminus\{0\};\\
	0 & {\rm if}\quad  \xi =0;\\
	-1 & {\rm if}\quad  \xi\in (-P)\setminus\{0\}.
	\end{cases}
\]
\begin{defn} We define the Generalised Bilinear Hilbert Transform in $G$ by the operator given by the multiplier $\mul(\xi,\eta)=-i{\rm \sign}_P(\eta- \xi)$. That is, it is given by the expression
\[
	\mathcal{H}_G(f,g)(x)=\iint_{\widehat{G}^2} -i{\rm sign}_P(\eta-\xi)\widehat{f}(\xi)\widehat{g}(\eta)\esc{\xi+\eta,x}\, \dd \xi \dd \eta.
\]
\end{defn}
\begin{thm}\label{thm:generalisedBHT} With the notations as above, there exists a constant $C$ such that for any $f\in L^{p_1}(G)$, $g\in L^{p_2}(G)$,
\[
	\norm{\mathcal{H}_G(f,g)}_{p}\leq C \norm{f}_{p_1}\norm{g}_{p_2},
\]
provided $\frac{2}{3}<p<\infty$, $1\leq p_1,p_2<\infty$. 
\end{thm}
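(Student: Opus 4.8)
The plan is to reduce the boundedness of the Generalised Bilinear Hilbert Transform $\mathcal{H}_G$ on the ordered group $G$ to the already-known boundedness of the classical bilinear Hilbert transform on $\R$, via the Homomorphism Theorem (Theorem \ref{thm:main theorem}). The multiplier is $\mul(\xi,\eta)=-i\,\mathrm{sign}_P(\eta-\xi)$, and I want to realise it as the pullback of the Euclidean bilinear Hilbert transform multiplier $M(s,t)=-i\,\mathrm{sign}(t-s)$ under a suitable homomorphism. The key structural fact I would exploit is that a measurable order $P$ on $\widehat{G}$ induces a (measurable) group homomorphism $\phi:\widehat{G}\to \R$ that is order-preserving, so that $\mathrm{sign}_P(\zeta)=\mathrm{sign}(\phi(\zeta))$ for $\zeta\neq 0$; dually this $\phi$ arises as $\phi=\pi$ where $\pi:\widehat{\R}=\R\to \widehat{G}$ is transposed appropriately. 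Concretely I would set $\Gamma=\R$ and choose the homomorphism so that $\mul=M\circ(\pi\otimes\pi)$, reducing everything to the $\R$ case.

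First I would record that the classical bilinear Hilbert transform, with multiplier $M(s,t)=-i\,\mathrm{sign}(t-s)$, is bounded from $L^{p_1}(\R)\times L^{p_2}(\R)$ to $L^p(\R)$ for the stated range $\tfrac23<p<\infty$, $1\le p_1,p_2<\infty$ with $\tfrac1{p_1}+\tfrac1{p_2}=\tfrac1p$; this is precisely the theorem of Lacey and Thiele together with its extensions below $p=1$, so $M\in \BM{p_1,p_2}{p}{\R}$. Second, using Remark \ref{rem:normalizedHT}, since $M(s,t)=\widetilde M(t-s)$ with $\widetilde M(u)=-i\,\mathrm{sign}(u)$, and $\mathrm{sign}$ is a normalized function on $\R$ (its only discontinuity is at $0$, a null set, and the approximate identity reproduces it there after antisymmetry), the multiplier $M$ is normalized on $\R^2=\widehat{\R}^2$ with respect to the chosen approximate identity. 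This places us exactly in the situation covered by Remark \ref{rem: normalized} and Remark \ref{eq:final_remark}, so the continuity hypothesis $\mul\in\mathcal{C}_b(\widehat{G})$ in Theorem \ref{thm:main theorem} may be replaced by normalizedness.

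Third, I would apply the Homomorphism Theorem in its normalized form: with $\pi:\widehat{G}\to\widehat{\Gamma}=\widehat{\R}=\R$ taken to be the order homomorphism $\phi$ above, Theorem \ref{thm:main theorem}(1) gives $M\circ(\pi\otimes\pi)\in\BM{p_1,p_2}{p}{G}$ with norm at most $\mathfrak{c}\,\norm{M}_{\wBM{p_1,p_2}{p}{\R}}$. It remains only to identify $M\circ(\pi\otimes\pi)$ with $\mul$: since $\pi$ is order-preserving we have $M(\pi(\xi),\pi(\eta))=-i\,\mathrm{sign}(\pi(\eta)-\pi(\xi))=-i\,\mathrm{sign}(\pi(\eta-\xi))=-i\,\mathrm{sign}_P(\eta-\xi)=\mul(\xi,\eta)$ for all $\xi,\eta$ with $\eta-\xi\neq 0$, i.e. almost everywhere. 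This delivers the claimed bound with $C=\mathfrak{c}\,\norm{M}_{\wBM{p_1,p_2}{p}{\R}}$.

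The main obstacle I anticipate is the construction of the order homomorphism $\pi:\widehat{G}\to\R$ with $\mathrm{sign}_P=\mathrm{sign}\circ\pi$ on $\widehat{G}\setminus\{0\}$, and verifying it is a measurable group homomorphism so that its dual $\widetilde\pi$ is continuous and induces the measure-preserving representation required by the transference machinery. For $G=\T$ (so $\widehat{G}=\Z$ with its usual order $P=\N\cup\{0\}$) this $\pi$ is simply the inclusion $\Z\hookrightarrow\R$, which is the honest homomorphism in the statement of Remark \ref{rem:normalizedHT}; in the general ordered case one must invoke the structure theory of ordered LCA groups to produce the order-embedding into $\R$ (or into a suitable ordered quotient), and one should check that $\pi$ need only be measurable for Theorem \ref{thm:main theorem} to apply, as its proof uses only the dual homomorphism $\widetilde\pi$ and the induced representation on $L^q(G)$. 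A secondary technical point is confirming the normalizedness of $\mathrm{sign}$ precisely with respect to the approximate identity $\{\Phi_j\}$ fixed in the proof of Theorem \ref{thm:Main}, but this follows from the antisymmetry of $\mathrm{sign}$ and the symmetry of the $\widehat{\varphi_j}$, so that the value at the single discontinuity point is recovered in the limit.
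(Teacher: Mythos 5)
Your overall strategy coincides with the paper's: transfer the Lacey--Thiele theorem on $\R$ through Theorem \ref{thm:main theorem}, with normalizedness (Remarks \ref{rem: normalized}, \ref{rem:normalizedHT} and \ref{eq:final_remark}) replacing the continuity hypothesis, and the identification $M(\pi(\xi),\pi(\eta))=-i\sign(\pi(\eta)-\pi(\xi))=-i\sign(\pi(\eta-\xi))=-i\sign_P(\eta-\xi)$. The gap is precisely the step you yourself call ``the main obstacle'': you assume a \emph{global} homomorphism $\phi:\widehat{G}\to\R$ with $\sign_P(\zeta)=\sign(\phi(\zeta))$ for all $\zeta\neq 0$, to be produced by ``the structure theory of ordered LCA groups''. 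No such homomorphism exists in general, because a measurable order need not be Archimedean. Take $G=\T^2$, so $\widehat{G}=\Z^2$, with the lexicographic order $P$: this is a measurable order, but any homomorphism $\phi:\Z^2\to\R$ with $\sign\circ\phi=\sign_P$ away from the origin would have to satisfy $\phi(0,1)>0$ and also $\phi(1,0)-n\,\phi(0,1)=\phi(1,-n)>0$ for every $n\geq 1$, which is impossible. So the pullback identity $\mul=M\circ(\pi\otimes\pi)$ cannot be arranged on all of $\widehat{G}^2$, and your argument as written fails at its first structural step.

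The paper's proof circumvents exactly this with two ingredients absent from your proposal. First, a density reduction: it suffices to bound $\mathcal{H}_G(f,g)$ for $f,g$ whose Fourier transforms have compact supports $\mathcal{K}_f,\mathcal{K}_g$, \emph{provided the constant is independent of those supports}. Second, the local theorem of Asmar and Hewitt \cite{MR930884}*{Theorem (5.14)}: for each such pair there is a continuous homomorphism $\pi:\widehat{G}\to\R$, depending on $\mathcal{K}_f$ and $\mathcal{K}_g$, with $\sign_P(\xi)=\sign(\pi(\xi))$ for a.e.\ $\xi\in\mathcal{K}_g-\mathcal{K}_f$; this local identity suffices because $B_{\mul}(f,g)$ only sees the multiplier at frequencies $(\xi,\eta)\in\mathcal{K}_f\times\mathcal{K}_g$, where $\eta-\xi\in\mathcal{K}_g-\mathcal{K}_f$. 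What makes this support-dependent choice of $\pi$ harmless is the feature you noted but did not exploit: the bound $\mathfrak{c}\,\norm{M}_{\BM{p_1,p_2}{p}{\R}}$ produced by Theorem \ref{thm:main theorem} does not depend on the homomorphism, so a single constant serves for all supports and the density argument closes. (A side remark: your concern that $\pi$ might only be measurable is not the real issue --- the proof of Theorem \ref{thm:main theorem} forms the dual homomorphism $\widetilde{\pi}$ via \cite{MR551496}*{(24.38)} and thus uses continuity, but Asmar--Hewitt supply a continuous homomorphism, so nothing needs to be weakened.)
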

\begin{proof} By density, it is enough to prove the result for $f,g$ such that the support of $\widehat{f}$, $\widehat{g}$ is compact, with constants independently on these supports. Let $\mathcal{K}_f, \mathcal{K}_g$ be the support of $\widehat{f}$ and $\widehat{g}$ respectively. By \cite{MR930884}*{Theorem (5.14)}, there exists a homomorphism $\pi$ from $\widehat{G}$ to $\R$ such that the equality 
\[
	\sign_P(\xi)=\sign(\pi(\xi))
\]
holds for a.e. $\xi\in \mathcal{K}_g-\mathcal{K}_f$. Thus, since $\pi$ is an homomorphism
\[
	\sign_P(\eta-\xi)=\sign(\pi(\eta-\xi))=\sign(\pi(\xi)-\pi(\eta)).
\]
Hence, since by Remark \ref{rem:normalizedHT} $\sign$ is a normalized multiplier, we can apply Theorem \ref{thm:main theorem} (see Remark \ref{rem: normalized}), jointly with  Lacey and Thiele's results in \cite{MR1689336} to conclude the proof. 
\end{proof}
\subsubsection{Isomorphic groups} 
The following result is an immediate consequence of applying Theorem \ref{thm:main theorem} twice. 
\begin{cor}\label{cor:Isomorphism} If $G,\Gamma$ are LCA groups that are topologically isomorphic, then the spaces  $\BM{p_1,p_2}{p}{\Gamma}\cap \mathcal{C}_b(\Gamma)$ and $\BM{p_1,p_2}{p}{G}\cap  \mathcal{C}_b(G)$ ($\wBM{p_1,p_2}{p}{\Gamma}\cap \mathcal{C}_b(\Gamma)$ and $\wBM{p_1,p_2}{p}{G}\cap  \mathcal{C}_b(G)$ respectively), are isomorphic.
\end{cor}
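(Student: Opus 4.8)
The plan is to deduce Corollary \ref{cor:Isomorphism} directly from Theorem \ref{thm:main theorem} by using the isomorphism and its inverse as the homomorphisms $\pi$. Let $\Psi: G \to \Gamma$ be the given topological isomorphism of LCA groups. By duality (see \cite{MR551496}*{(24.38)}), $\Psi$ induces a topological isomorphism of the dual groups; more precisely, the adjoint $\widehat{\Psi}: \widehat{\Gamma} \to \widehat{G}$ is a topological isomorphism, and likewise $\widehat{\Psi^{-1}}: \widehat{G} \to \widehat{\Gamma}$ is its inverse. The key observation is that both of these dual maps are continuous group homomorphisms between dual groups, precisely the kind of map to which Theorem \ref{thm:main theorem} applies.

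First I would set $\pi = \widehat{\Psi^{-1}}: \widehat{G} \to \widehat{\Gamma}$, which is a homomorphism between the dual of $G$ and the dual of $\Gamma$. Given $\mul \in \BM{p_1,p_2}{p}{\Gamma} \cap \mathcal{C}_b(\widehat{\Gamma})$, Theorem \ref{thm:main theorem}(1) yields that $\mul \circ (\pi \otimes \pi) \in \BM{p_1,p_2}{p}{G}$ with
\[
    \norm{\mul\circ(\pi\otimes\pi)}_{\BM{p_1,p_2}{p}{G}} \leq \mathfrak{c}\,\norm{\mul}_{\BM{p_1,p_2}{p}{\Gamma}}.
\]
Since $\pi$ is a homeomorphism, $\mul \circ (\pi \otimes \pi)$ remains in $\mathcal{C}_b(\widehat{G})$, so the assignment $\mul \mapsto \mul\circ(\pi\otimes\pi)$ maps $\BM{p_1,p_2}{p}{\Gamma}\cap\mathcal{C}_b(\widehat{\Gamma})$ boundedly into $\BM{p_1,p_2}{p}{G}\cap\mathcal{C}_b(\widehat{G})$. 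Applying the theorem a second time with the homomorphism $\pi^{-1} = \widehat{\Psi}: \widehat{\Gamma} \to \widehat{G}$ furnishes the reverse map, with the analogous bound; this is the meaning of applying Theorem \ref{thm:main theorem} ``twice.''

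To conclude that the two spaces are isomorphic, I would verify that these two maps are mutually inverse. Since $\pi \circ \pi^{-1}$ and $\pi^{-1}\circ \pi$ are the respective identity maps on $\widehat{G}$ and $\widehat{\Gamma}$, the compositions $(\mul\circ(\pi\otimes\pi))\circ(\pi^{-1}\otimes\pi^{-1})$ and its counterpart recover $\mul$ exactly. Thus the two assignments are linear bijections, each bounded (indeed with norm controlled by $\mathfrak{c}$), so they constitute an isomorphism of the two intersection spaces. The weak-type case is handled identically using Theorem \ref{thm:main theorem}(2) in place of part (1).

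I do not anticipate a serious obstacle here, since the corollary is a formal consequence of the main theorem. The only point requiring minor care is bookkeeping: one must confirm that the dual homomorphisms are genuine isomorphisms (so that continuity and the $\mathcal{C}_b$ membership are preserved in both directions) and that composing the two transference maps returns the original multiplier unchanged, which ensures the maps are inverse bijections rather than merely bounded in each direction. The mild subtlety, if any, is that the isomorphism obtained need not be isometric, since each application of the theorem may introduce the constant $\mathfrak{c}$; but for the statement ``isomorphic'' this is immaterial.
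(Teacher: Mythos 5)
Your proof is correct and takes precisely the route the paper intends: the paper's entire justification is the remark that the corollary is ``an immediate consequence of applying Theorem \ref{thm:main theorem} twice,'' with the dual maps of the topological isomorphism serving as the homomorphisms $\pi$. Your write-up merely makes explicit the bookkeeping the paper leaves implicit --- that $\widehat{\Psi^{-1}}$ and $\widehat{\Psi}$ are mutually inverse continuous homomorphisms, that composition with a homeomorphism preserves $\mathcal{C}_b$, and that the resulting bijections are bounded (with constant $\mathfrak{c}$) but not necessarily isometric.
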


In particular the previous corollary and  \cite{MR551496}*{Theorem (9.8)} imply that, if $G$ is a LCA group such that $\widehat{G}$ is a compactly generated LCA (see \cite{MR551496}*{Definition (5.12)}), then $\BM{p_1,p_2}{p}{G}\cap  \mathcal{C}_b(\widehat{G})$ is fully characterised by the space $\BM{p_1,p_2}{p}{\Gamma}\cap  \mathcal{C}_b(\widehat{\Gamma})$ where 
$\Gamma$ is a LCA group of the type $\R^a\times \T^b\times K$,  where $a,b$ are non-negative integers and $K$ is a discrete Abelian group.  

\subsection{Consequences of Theorem \ref{thm:Main} }

As a result of the approximation theorem, we will obtain also, a  necessary condition (analogous to Hormander's \cite[Theorem 1.1]{MR0121655}), which generalises Grafakos and Torres's \cite[Proposition 5]{MR1880324}.  We shall first recall the concept of Boyd indices of a RI QBFS. 

\begin{defn}(see \cite{MR928802}) For any RI QBFS $X$, the
upper Boyd index is defined by
\begin{equation}\label{eq:boydsup}
    \overline{\alpha}_X=\inf\set{p:\ \exists c\, \forall a>1\, h_X(a)\leq c
    a^p},
\end{equation}
and the lower Boyd index by
\begin{equation}\label{eq:boydinf}
    \underline{\alpha}_X=\sup\set{p:\ \exists c\, \forall a<1\, h_X(a)\leq c
    a^p}.
\end{equation}
\end{defn}

We shall mention that, for Lorentz spaces $X=L^{p,q}$, and in particular for $L^{p}$ spaces, $\underline{\alpha}_X= \overline{\alpha}_X=\frac{1}{p}$. 
\begin{thm}\label{thm:necessity} Let $G$ be a non-compact LCA group and let $X_1,X_2, X$ be RI QBFSs on $G$ such that that $X$ is the $p$-convexification of a RI BFS and $X_j$ is a $p_j$-concave RI BFS, for $j=1,2$.  If there exists $\mul\in \BM{X_{1},X_{2}}{X}{G}$, $\mul\neq 0$, then
\[
    \underline{\alpha}_{X}\leq  \overline{\alpha}_{X_1}+\overline{\alpha}_{X_2}.
\]
\end{thm}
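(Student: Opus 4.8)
The strategy is to derive a contradiction from the existence of a nonzero bilinear multiplier when the Boyd index inequality fails, by exploiting the non-compactness of $G$ to produce arbitrarily many ``almost disjointly supported'' translates, and then testing the multiplier inequality \eqref{eq:eq_1} on sums of such translates. First I would reduce to working with a multiplier of the special form $\mul=\widehat{K}$ with $K\in L^1_c(\widehat{G}^2)$, or at least with a genuine convolution kernel: Theorem \ref{thm:Main} provides a sequence $\{\mul_j\}_j$ with $\mul_j^\vee\in L^1_c(\widehat{G}^2)$, satisfying $(P_1)$--$(P_4)$, so that the operator $B_{\mul_j}$ acts by integration against a compactly supported kernel $K_j$. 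For such a kernel, $B_{\mul_j}(f,g)(x)=\iint K_j(u,v) f(x-u)g(x-v)\,\dd u\,\dd v$, and the compact support of $K_j$ together with the non-compactness of $G$ is exactly what makes the disjointness argument run.

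The core of the argument is the following testing step. Fix $f,g$ bounded and supported in a fixed compact set $V$, chosen so that $B_{\mul_j}(f,g)$ is not identically zero (possible since $\mul_j\to\mul\neq 0$ a.e.). Because $G$ is non-compact and $\sigma$-compact, I can choose points $x_1,\dots,x_N\in G$ spread out enough that the translates $\{f(\cdot-x_i)\}$, $\{g(\cdot-x_i)\}$ have pairwise disjoint supports, and moreover so that the translated outputs $B_{\mul_j}(f,g)(\cdot - x_i)$ also have essentially disjoint supports (this uses that $K_j$ is compactly supported, so $\supp B_{\mul_j}(f,g)\subset V+V+\supp K_j$ is compact). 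By translation invariance of $B_{\mul_j}$ one has $B_{\mul_j}\bigl(\sum_i f(\cdot-x_i),\sum_i g(\cdot-x_i)\bigr)=\sum_i B_{\mul_j}(f,g)(\cdot-x_i)$ on the relevant region, the cross terms vanishing by disjointness. Now I evaluate both sides of \eqref{eq:eq_1} on these sums. Since the summands are disjointly supported, and $X,X_1,X_2$ are rearrangement invariant, the norm of a disjoint sum of $N$ translates of a fixed function $h$ equals the norm of the dilate $E_{N}h^*$, which is controlled by the dilation function $h_X(N)$. Thus the multiplier inequality yields, for a fixed nonzero constant $c_0=\norm{B_{\mul_j}(f,g)}_X / (\norm{f}_{X_1}\norm{g}_{X_2})>0$,
\[
    c_0\, h_X(N) \;\lesssim\; \norm{\mul_j}_{\BM{X_1,X_2}{X}{G}}\, h_{X_1}(N)\, h_{X_2}(N),
\]
for all $N$ (up to the minor correction coming from disjointness being only approximate).

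Taking logarithms and letting $N\to\infty$ converts this into an inequality between the growth exponents of the dilation functions. By the definition \eqref{eq:boydinf} of the lower Boyd index, $h_X(N)$ grows at least like $N^{\underline{\alpha}_X}$ along a suitable sequence, while by \eqref{eq:boydsup} one has $h_{X_i}(N)\lesssim N^{\overline{\alpha}_{X_i}+\varepsilon}$ for every $\varepsilon>0$; comparing exponents gives $\underline{\alpha}_X\leq \overline{\alpha}_{X_1}+\overline{\alpha}_{X_2}+2\varepsilon$, and letting $\varepsilon\to 0$ yields the claim. The main obstacle, and the step requiring the most care, is the geometric selection of the points $x_1,\dots,x_N$ to guarantee genuine disjointness of supports simultaneously on the input side and on the output side, uniformly enough that the rearrangement-invariant norms of the disjoint sums are exactly governed by the dilation operator $E_N$; this is where non-compactness of $G$ is essential and where one must be careful that the approximation $\mul_j$ has a nonzero transferred operator on the chosen test functions so that $c_0$ can be taken bounded below independently of $j$ (or, alternatively, fix a single $j$ with $B_{\mul_j}(f,g)\not\equiv 0$ and argue with that one kernel).
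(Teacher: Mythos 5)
Your plan follows essentially the same route as the paper's proof: reduce via Theorem \ref{thm:Main} to a single nonzero multiplier $\mul_j=\widehat{K_j}$ with $K_j\in L^1_c(G^2)$ (your parenthetical ``fix a single $j$ with $B_{\mul_j}(f,g)\not\equiv 0$'' is exactly the right move, and it removes any need for uniformity in $j$); test the multiplier inequality on sums of translates made disjoint by the non-compactness of $G$ (the paper chooses $y_0,\dots,y_N$ so that the compact sets $\mathcal{K}_0+\mathcal{K}+y_k$ are pairwise disjoint, so the disjointness can be taken exact, not merely approximate); identify the rearrangement of a disjoint sum of $N$ translates with a dilate; and compare growth rates through the Boyd indices.

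However, one displayed step does not follow as written. Writing $E_{1/N}h^*(t)=h^*(t/N)$ as in \eqref{Dilation}, the testing argument gives
\[
\norm{E_{1/N}\brkt{B_{\mul_j}(f,g)}^*}_{X^*}\leq \norm{\mul_j}_{\BM{X_{1},X_{2}}{X}{G}}\, h_{X_1}(N)\, h_{X_2}(N)\,\norm{f}_{X_1}\norm{g}_{X_2},
\]
and your inequality $c_0\,h_X(N)\lesssim \norm{\mul_j}\,h_{X_1}(N)\,h_{X_2}(N)$ would additionally require the lower bound $\norm{E_{1/N}\brkt{B_{\mul_j}(f,g)}^*}_{X^*}\geq h_X(N)\,\norm{B_{\mul_j}(f,g)}_X$ for the \emph{specific} function $B_{\mul_j}(f,g)$. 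This is the wrong direction: by \eqref{Dilation_norm}, $h_X(N)$ is a supremum over all functions, so the definition yields precisely the reverse bound $\norm{E_{1/N}h^*}_{X^*}\leq h_X(N)\norm{h}_{X}$, and a particular test function need not come close to attaining the supremum. The correct lower bound uses the other half of the dilation family: since $E_{N}E_{1/N}\brkt{B_{\mul_j}(f,g)}^*=\brkt{B_{\mul_j}(f,g)}^*$ and $E_N$ has norm $h_X(1/N)$, one gets $\norm{B_{\mul_j}(f,g)}_X\leq h_X(1/N)\,\norm{E_{1/N}\brkt{B_{\mul_j}(f,g)}^*}_{X^*}$, hence $c_0\leq \norm{\mul_j}_{\BM{X_{1},X_{2}}{X}{G}}\, h_X(1/N)\, h_{X_1}(N)\, h_{X_2}(N)$, which is exactly the inequality the paper derives. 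The repair costs nothing downstream: the lower Boyd index \eqref{eq:boydinf} is defined through the decay of $h_X(a)$ for $a<1$, so $h_X(1/N)\leq c\,N^{-p}$ for every $p<\underline{\alpha}_{X}$, and combined with $h_{X_i}(N)\leq c\,N^{\overline{\alpha}_{X_i}+\varepsilon}$ this forces $p\leq \overline{\alpha}_{X_1}+\overline{\alpha}_{X_2}+2\varepsilon$, giving the theorem. Note that your own appeal to \eqref{eq:boydinf} to claim ``$h_X(N)$ grows at least like $N^{\underline{\alpha}_X}$'' already smuggles in the submultiplicative inequality $h_X(N)\,h_X(1/N)\geq 1$; applying \eqref{eq:boydinf} to $h_X(1/N)$ directly is both what the argument needs and what the definition provides.
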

\begin{proof}
Observe that by Theorem \ref{thm:Main}, we can reduce ourselves to the case that  there exists $K\in L^1_c(G^2)\setminus \{0\}$ such that $\widehat{K}=\mul$.

        Let $\mathcal{K}_0$ be a symmetric compact neighbourhood of $e_g$ in $G$ such that
    $\supp K\in \mathcal{K}_0\times \mathcal{K}_0$. Observe that if $f_1,f_2$ are functions in $SL^1(G)$, supported in compact sets $L_1$ and $L_2$ respectively, then the operator 
    \[
    	B_\mul(f_1,f_2)(x)=\iint_{G^2} K(u_1,u_2) f_1(x-u_1)f_2(x-u_2)\, \dd u,
    \]
    is supported in the compact set $\brkt{\mathcal{K}_0+L_1}\cap \brkt{\mathcal{K}_0+L_2}$. 
    
    Let $\mathcal{K}$ be a compact neighbourhood of $e_g$ and let $f,g\in SL^1(G)$ with
    support in $\mathcal{K}$ such that $\norm{B_K(f,g)}_{X}> 0$. Observe that 
    \[
        \supp B_{\mul}(f,g)\subset\mathcal{K}_0+\mathcal{K}.
    \]
   Consider the translation operator $\tau$ given by $\tau_y g(x)=g(x-y)$. Since $G$ is  not compact, there exists a sequence $\{y_j\}_{j\geq 0}$ of elements of $G$,  with $y_0=e_g$, such that the compact sets $\{\mathcal{K}_0+\mathcal{K}+y_j\}_{j\geq 0}$ are pairwise disjoint.  It follows that  for any pair of indices $j\neq k$
    \begin{eqnarray}
            \label{eq:tech_1}B_{\widehat{K}}(\tau_{y_j}f,\tau_{y_k}g)&=& 0;\\
            \label{eq:tech_2}\tau_{y_j}f \tau_{y_k}f &=&0;\\
             \label{eq:tech_4}\tau_{y_j}g \tau_{y_k}g &=&0;\\
            \label{eq:tech_3}\tau_{y_j}B_{\widehat{K}}(f,g) \tau_{y_k}B_{\widehat{K}}(f,g)
            &=&  0.
    \end{eqnarray}
    Thus, for any $N\geq 1$, bilinearity and \eqref{eq:tech_1} yield
    \[
        \sum_{k=0}^N
        \tau_{y_k}B_{\widehat{K}}(f,g)(x)=\sum_{k=0}^N\sum_{j=0}^N
        B_{\widehat{K}}(\tau_{y_j}f,\tau_{y_k}g)(x)=B_{\widehat{K}}(\sum_{j=0}^N\tau_{y_j}f,\sum_{k=0}^N\tau_{y_k}g)(x).
    \]
    Then \eqref{eq:tech_3} yields
    \[
        (N+1)\mu\set{x\in G:\, \abs{B_{\widehat{K}}(f,g)(x)}>s}= \mu\set{x\in G:\, \abs{\sum_{k=0}^N
        \tau_{y_k}B_{\widehat{K}}(f,g)(x)}>s},
    \]
    which implies,  
    \[
        \norm{E_{{N+1}}\brkt{B_{\widehat{K}}(f,g)}^*}_{X^*}\leq
        \norm{\widehat{K}} \norm{\sum_{j=0}^N\tau_{y_j}f}_{X_1}
        \norm{\sum_{k=0}^N\tau_{y_k}g}_{X_2},
    \]
    where recall that $(E_t f)(s)=f^*(ts)$ denotes the dilation operator (see \eqref{Dilation}  above). By  \eqref{eq:tech_2} and \eqref{eq:tech_4}  the term on the right hand is equal to
    \[
        \norm{\widehat{K}} \norm{E_{N+1}f^*}_{X_1^*}
        \norm{E_{N+1}g^*}_{X_2^*}.
    \]
    Therefore, by \eqref{Dilation_norm}, 
    \[
        0<\norm{B_{\widehat{K}}(f,g)}_{X}\leq \norm{\widehat{K}}
        h_{X}\brkt{\frac{1}{N+1}} h_{X_1}(N+1) h_X{X_2}(N+1)
        \norm{f}_{X_1}\norm{g}_{X_2}.
    \]
 Hence, since $f,g$ are fixed, this implies that there exists a constant $c>0$ such that for any $N$
    \[
         h_{X}\brkt{\frac{1}{N+1}} h_{X_1}(N+1) h_{X_2}(N+1)>c,
    \]
    which,  by \eqref{eq:boydsup} and \eqref{eq:boydinf}, yields
    that
    \[
        \underline{\alpha}_{X}\leq
        \overline{\alpha}_{X_1}+\overline{\alpha}_{X_2}.
    \]
\end{proof}

\begin{rem} 
Observe that in the previous proof, we have used the convexity assumptions only for being able to apply Theorem \ref{thm:Main}, in order to ensure the existence of a multiplier $\mul$, which Fourier transform is a compactly supported integrable function. Hence, we could have dropped the convexity conditions if we have imposed this last condition on $\mul$ instead. 
\end{rem}

As an application of the previous theorem we can obtain an extended version of L. Grafakos and J. Soria's result \cite{MR2595656}*{Theorem 1} to RI QBFSs.
\begin{cor}  Let $G$ be a non-compact LCA group and let $X_1,X_2, X_2$ RI QBFS on $G$. If there exists $K\in L^1(G^2)\setminus \{0\}$, $K\geq 0$, such that
$\widehat{K}\in\BM{X_{1},X_{2}}{X}{G}$, then
\[
    \underline{\alpha}_{X}\leq  \overline{\alpha}_{X_1}+\overline{\alpha}_{X_1}.
\]
\end{cor}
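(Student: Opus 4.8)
The plan is to bypass the convexity hypotheses of Theorem~\ref{thm:necessity} — which there serve only to invoke Theorem~\ref{thm:Main} and thereby produce a multiplier with compactly supported integrable kernel — by exploiting the sign condition $K\geq 0$ to truncate $K$ to compact support by hand. Concretely, I would fix an increasing sequence of compact neighbourhoods $\mathcal{K}_n\uparrow G$ of $e_G$ and set $K_n=K\,\chi_{\mathcal{K}_n\times\mathcal{K}_n}$, so that $K_n\in L^1_c(G^2)$, $0\leq K_n\uparrow K$ a.e., and $K_n\neq 0$ for all large $n$ since $K\neq 0$. From now on I fix such an $n$ and write $\widetilde K=K_n$, the point being to show $\widehat{\widetilde K}$ is an admissible kernel for the necessity argument with the same constant, and \emph{without} any convexity assumption.

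The mechanism is positivity together with the lattice structure of a QBFS. For nonnegative $f,g\in SL^1(G)$, the integral representation of $B_{\mul}$ (valid for $L^1(G^2)$ kernels by Fubini, and recorded for compactly supported kernels in the remark following the definition of $B_\mul$) gives
\[
 0\leq B_{\widehat{\widetilde K}}(f,g)(x)=\iint_{G^2}\widetilde K(u,v)\,f(x-u)\,g(x-v)\,{\dd u}\,{\dd v}\leq B_{\widehat K}(f,g)(x),
\]
the last inequality because $0\leq\widetilde K\leq K$ and the integrands are nonnegative. Property~(2) in the definition of a QBFS then yields
\[
 \norm{B_{\widehat{\widetilde K}}(f,g)}_X\leq\norm{B_{\widehat K}(f,g)}_X\leq\norm{\widehat K}_{\BM{X_1,X_2}{X}{G}}\norm{f}_{X_1}\norm{g}_{X_2},
\]
so the compactly supported $\widehat{\widetilde K}$ satisfies \eqref{eq:eq_1} on the cone of nonnegative data with constant $\norm{\widehat K}_{\BM{X_1,X_2}{X}{G}}$.

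With $\widetilde K\in L^1_c(G^2)\setminus\{0\}$ in hand, I would rerun the proof of Theorem~\ref{thm:necessity} verbatim, choosing the test functions $f,g\in SL^1(G)$ to be \emph{nonnegative} bumps supported in a compact neighbourhood $\mathcal{K}$ of $e_G$. Since $\widetilde K\geq 0$ and $\widetilde K\neq 0$, one arranges $\norm{B_{\widehat{\widetilde K}}(f,g)}_X>0$; the compact support of $\widetilde K$ and the non-compactness of $G$ then furnish a sequence $\{y_j\}$ making the translates $\{\mathcal{K}_0+\mathcal{K}+y_j\}$ pairwise disjoint, and the orthogonality relations \eqref{eq:tech_1}--\eqref{eq:tech_3} together with the dilation-norm identity \eqref{Dilation_norm} produce $h_X(\tfrac{1}{N+1})\,h_{X_1}(N+1)\,h_{X_2}(N+1)>c$ for all $N$, whence $\underline{\alpha}_X\leq\overline{\alpha}_{X_1}+\overline{\alpha}_{X_2}$ by \eqref{eq:boydsup}--\eqref{eq:boydinf}. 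The main obstacle is bookkeeping rather than conceptual: the bound of the previous paragraph is only guaranteed on nonnegative data, so I must check that every occurrence of $B_{\widehat{\widetilde K}}$ in the necessity argument is evaluated on nonnegative inputs. This holds once $f,g$ are chosen nonnegative, because nonnegativity is preserved under the translations $\tau_{y_j}$ and the finite sums $\sum_j\tau_{y_j}f$, $\sum_k\tau_{y_k}g$ appearing there, so the restricted inequality is exactly what the argument consumes.
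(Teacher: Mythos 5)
Your proposal is correct and takes essentially the same route as the paper: both exploit $K\geq 0$ to truncate the kernel to a nonzero element of $L^1_c(G^2)$ via pointwise domination, and both then rely on the observation (the paper's remark following Theorem~\ref{thm:necessity}) that the necessity argument requires no convexity hypotheses once the kernel is compactly supported and integrable. The only difference is bookkeeping: the paper upgrades the domination to a genuine multiplier bound $\widehat{K\chi_{\mathcal{K}}}\in\BM{X_{1},X_{2}}{X}{G}$ by passing through the positive operator $P_K(f,g)=\iint_{G^2} K(u_1,u_2)\abs{f(\cdot-u_1)}\abs{g(\cdot-u_2)}\,\dd u_1\,\dd u_2$, so Theorem~\ref{thm:necessity} applies as a black box, whereas you keep the bound only on the cone of nonnegative data and re-run that theorem's proof on the cone --- which indeed suffices, as you verify, since with nonnegative test functions the argument only ever applies the multiplier bound to nonnegative inputs.
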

\begin{proof}  Since $K\geq 0$, the boundedness of $B_K$ is equivalent to the boundedness of the operator 
\[
	P_K(f,g)(v)=\iint_{G^2} K(u_1,u_2) \abs{f(v-u_1)} \abs{g(v-u_2)}\, \dd u_1\dd u_2,
\]
and in particular, for any compact set in $\mathcal{K}\subset G^2$ such that $K$ is not zero on $\mathcal{K}$, $P_{K\chi_{\mathcal{K}}}$ defines a bounded operator from $X_1\times X_2\to X$. Then $K\chi_{\mathcal{K}}\in L^{1}_c(G^2)$ and $\widehat{K\chi_{\mathcal K}}\in \BM{X_{1},X_{2}}{X}{G}$. Thus the previous remark and Theorem \ref{thm:necessity} yield the result.
\end{proof}

If we particularise Theorem \ref{thm:necessity} to the case of classical Lorentz-spaces, we obtain, is the following extension of F. Villarroya's result \cite[Proposition 3.1
]{MR2471164} to arbitrary non compact LCA groups. 
\begin{cor}  Let $G$ be a non-compact LCA group and let $1< p_1,p_2<\infty$, $1\leq q_1,q_2\leq \infty$. If there exists $\mul\in \BM{L^{p_1,q_1},L^{p_2,q_2}}{L^{p,q}}{G}$, $\mul\neq 0$, then
\[
	\frac{1}{p}\leq \frac{1}{p_1}+\frac{1}{p_2}.
\]
\end{cor}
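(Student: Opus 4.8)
The plan is to prove the final Corollary, which specialises Theorem~\ref{thm:necessity} to Lorentz spaces, by directly identifying the Boyd indices of the relevant $L^{p,q}$ spaces and substituting them into the conclusion of that theorem. Recall from the discussion preceding Definition of the Boyd indices that for any Lorentz space $L^{p,q}$ one has $\underline{\alpha}_{L^{p,q}}=\overline{\alpha}_{L^{p,q}}=\tfrac{1}{p}$, independently of the secondary index $q$. Thus the entire content of the Corollary is the translation of the abstract inequality $\underline{\alpha}_X\leq \overline{\alpha}_{X_1}+\overline{\alpha}_{X_2}$ into the arithmetic statement $\tfrac{1}{p}\leq \tfrac{1}{p_1}+\tfrac{1}{p_2}$.

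First I would verify that the spaces in question satisfy the structural hypotheses of Theorem~\ref{thm:necessity}. Since $1<p_1,p_2<\infty$, each $X_j=L^{p_j,q_j}$ is a RIBFS, and for such a Lorentz space (with $1<p_j$) it is $p_j$-concave, so the concavity requirement $M_{(p_j)}(X_j)<\infty$ holds. For the target space $X=L^{p,q}$ I would note that the relation forced by the two given indices yields $p<\infty$, and that $L^{p,q}$ is the $p$-convexification of a RIBFS in the required range; this places us inside the scope of Theorem~\ref{thm:necessity}. With the hypotheses checked, the existence of a nonzero $\mul\in \BM{L^{p_1,q_1},L^{p_2,q_2}}{L^{p,q}}{G}$ on the non-compact LCA group $G$ is exactly the input the theorem requires.

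Next I would simply invoke Theorem~\ref{thm:necessity} to obtain
\[
    \underline{\alpha}_{L^{p,q}}\leq \overline{\alpha}_{L^{p_1,q_1}}+\overline{\alpha}_{L^{p_2,q_2}},
\]
and then substitute the computed Boyd indices $\underline{\alpha}_{L^{p,q}}=\tfrac{1}{p}$, $\overline{\alpha}_{L^{p_j,q_j}}=\tfrac{1}{p_j}$ for $j=1,2$. This immediately gives $\tfrac{1}{p}\leq \tfrac{1}{p_1}+\tfrac{1}{p_2}$, which is the desired conclusion.

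I do not expect any genuine obstacle here, as the Corollary is a direct specialisation. The only points requiring a sentence of justification are the two bookkeeping facts: that the Boyd indices of $L^{p,q}$ collapse to $\tfrac1p$ regardless of $q$ (already recorded in the text just before the theorem), and that the Lorentz spaces with $1<p_j<\infty$ genuinely satisfy the convexification and concavity conditions so that Theorem~\ref{thm:necessity} applies. If one wished to be careful, the mildest subtlety is confirming $p$-concavity of $L^{p,q}$ for all admissible secondary indices $q$ rather than only for $q=p$; this is standard for $1<p<\infty$ and can be cited, so the proof remains essentially a one-line application of the main necessity theorem.
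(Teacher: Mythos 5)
Your route is the same as the paper's: specialise Theorem \ref{thm:necessity} to Lorentz spaces and substitute the Boyd indices $\underline{\alpha}_{L^{p,q}}=\overline{\alpha}_{L^{p,q}}=1/p$. However, your verification of the concavity hypothesis contains a step that is false, not merely a citable subtlety. You assert that $L^{p_j,q_j}$, $1<p_j<\infty$, is $p_j$-concave for every $1\le q_j\le\infty$. This fails whenever $q_j>p_j$: taking disjoint sets $E_k$ with $|E_k|=4^{-k}$ and $f_k=4^{k/p_j}\chi_{E_k}$, each $\|f_k\|_{L^{p_j,q_j}}$ is a constant independent of $k$, while
\[
\Bigl\|\Bigl(\sum_{k\le n}|f_k|^{p_j}\Bigr)^{1/p_j}\Bigr\|_{L^{p_j,q_j}}=\Bigl\|\max_{k\le n}f_k\Bigr\|_{L^{p_j,q_j}}\approx n^{1/q_j},
\]
so $p_j$-concavity would force $n^{1/p_j}\lesssim n^{1/q_j}$, which fails for large $n$. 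Worse, for $q_j=\infty$ the same functions satisfy $\|\max_{k\le n}f_k\|_{L^{p_j,\infty}}\le 2^{1/p_j}$ while $\|f_k\|_{L^{p_j,\infty}}=1$, i.e.\ they span uniform lattice copies of $\ell_\infty^n$; hence $L^{p_j,\infty}$ is not $s$-concave for \emph{any} finite $s$, and Theorem \ref{thm:necessity} cannot be applied with $X_j=L^{p_j,\infty}$ at all. So as written your proof does not cover the stated range $q_j\le\infty$. (A smaller slip: you invoke a ``relation forced by the two given indices'' to get $p<\infty$, but the corollary imposes no relation among $p,p_1,p_2$.)

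The gap is repairable within the paper's framework, and the repair is presumably what the paper's one-line derivation implicitly uses. First, the indices $p,p_1,p_2$ in Theorems \ref{thm:Main} and \ref{thm:necessity} are free parameters, not tied to the Lorentz exponents: for $q_j<\infty$ it suffices that $L^{p_j,q_j}$ be $s_j$-concave for \emph{some} $s_j\in[1,\infty)$, which holds for any $s_j>\max(p_j,q_j)$. Second, to handle $q_j=\infty$ (and indeed all cases uniformly), use monotonicity of Lorentz spaces in the second index: since $L^{p_j,r}\hookrightarrow L^{p_j,q_j}$ for $r\le q_j$, a nonzero $\mul\in\BM{L^{p_1,q_1},L^{p_2,q_2}}{L^{p,q}}{G}$ is automatically a nonzero element of $\BM{L^{p_1,r_1},L^{p_2,r_2}}{L^{p,q}}{G}$ with $r_j=\min(p_j,q_j)$. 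The spaces $L^{p_j,r_j}$ are RIBFSs with finite concavity indices and with the same Boyd indices $1/p_j$, so Theorem \ref{thm:necessity} applies to them and yields $1/p\le 1/p_1+1/p_2$, which is the asserted conclusion for the original spaces as well.
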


\begin{bibdiv}
\begin{biblist}

\bib{MR1001119}{article}{
      author={Asmar, Nakhl{\'e}},
       title={A homomorphism theorem for multipliers},
        date={1989},
        ISSN={0013-0915},
     journal={Proc. Edinburgh Math. Soc. (2)},
      volume={32},
      number={2},
       pages={213\ndash 221},
         url={http://dx.doi.org/10.1017/S0013091500028613},
      review={\MR{1001119 (91e:43006)}},
}

\bib{MR930884}{inproceedings}{
      author={Asmar, Nakhl{\'e}},
      author={Hewitt, Edwin},
       title={Marcel {R}iesz's theorem on conjugate {F}ourier series and its
  descendants},
        date={1988},
   booktitle={Proceedings of the analysis conference, {S}ingapore 1986},
      series={North-Holland Math. Stud.},
      volume={150},
   publisher={North-Holland},
     address={Amsterdam},
       pages={1\ndash 56},
         url={http://dx.doi.org/10.1016/S0304-0208(08)71327-6},
      review={\MR{930884 (89h:43007)}},
}

\bib{MR928802}{article}{
      author={Bennett, Colin},
      author={Sharpley, Robert},
       title={Interpolation of operators},
        date={1988},
      volume={129},
       pages={xiv+469},
      review={\MR{MR928802 (89e:46001)}},
}

\bib{MR1004717}{article}{
      author={Berkson, Earl},
      author={Gillespie, Thomas~A.},
      author={Muhly, Paul~S.},
       title={Generalized analyticity in {UMD} spaces},
        date={1989},
        ISSN={0004-2080},
     journal={Ark. Mat.},
      volume={27},
      number={1},
       pages={1\ndash 14},
         url={http://dx.doi.org/10.1007/BF02386355},
      review={\MR{1004717 (90m:43009)}},
}

\bib{MR2172393}{article}{
      author={Blasco, Oscar},
       title={Bilinear multipliers and transference},
        date={2005},
        ISSN={0161-1712},
     journal={Int. J. Math. Math. Sci.},
      number={4},
       pages={545\ndash 554},
         url={http://dx.doi.org/10.1155/IJMMS.2005.545},
      review={\MR{2172393 (2006g:42013)}},
}

\bib{MR2169476}{article}{
      author={Blasco, Oscar},
      author={Carro, Mar{\'{\i}}a},
      author={Gillespie, Thomas~A.},
       title={Bilinear {H}ilbert transform on measure spaces},
        date={2005},
        ISSN={1069-5869},
     journal={J. Fourier Anal. Appl.},
      volume={11},
      number={4},
       pages={459\ndash 470},
         url={http://dx.doi.org/10.1007/s00041-005-4074-1},
      review={\MR{2169476 (2006d:42010)}},
}

\bib{MR2037006}{article}{
      author={Blasco, Oscar},
      author={Villarroya, Francisco},
       title={Transference of bilinear multiplier operators on {L}orentz
  spaces},
        date={2003},
        ISSN={0019-2082},
     journal={Illinois J. Math.},
      volume={47},
      number={4},
       pages={1327\ndash 1343},
         url={http://projecteuclid.org/getRecord?id=euclid.ijm/1258138107},
      review={\MR{2037006 (2004k:42009)}},
}

\bib{MR2609318}{article}{
      author={Carro, Mar{\'{\i}}a.},
      author={Rodr{\'{\i}}guez-L{\'o}pez, Salvador},
       title={New results on restriction of {F}ourier multipliers},
        date={2010},
        ISSN={0025-5874},
     journal={Math. Z.},
      volume={265},
      number={2},
       pages={417\ndash 435},
         url={http://dx.doi.org/10.1007/s00209-009-0522-y},
      review={\MR{2609318 (2011c:42029)}},
}

\bib{MR2888205}{article}{
      author={Carro, Mar{\'{\i}}a.},
      author={Rodr{\'{\i}}guez-L{\'o}pez, Salvador},
       title={On restriction of maximal multipliers in weighted settings},
        date={2012},
        ISSN={0002-9947},
     journal={Trans. Amer. Math. Soc.},
      volume={364},
      number={5},
       pages={2241\ndash 2260},
         url={http://dx.doi.org/10.1090/S0002-9947-2012-05598-X},
      review={\MR{2888205}},
}

\bib{MR518170}{book}{
      author={Coifman, Ronald~R.},
      author={Meyer, Yves},
       title={Au del\`a des op\'erateurs pseudo-diff\'erentiels},
      series={Ast\'erisque},
   publisher={Soci\'et\'e Math\'ematique de France},
     address={Paris},
        date={1978},
      volume={57},
        note={With an English summary},
      review={\MR{518170 (81b:47061)}},
}

\bib{MR0336233}{article}{
      author={Coifman, Ronald~R.},
      author={Weiss, Guido},
       title={Operators associated with representations of amenable groups,
  singular integrals induced by ergodic flows, the rotation method and
  multipliers},
        date={1973},
        ISSN={0039-3223},
     journal={Studia Math.},
      volume={47},
       pages={285\ndash 303},
      review={\MR{MR0336233 (49 \#1009)}},
}

\bib{MR0481928}{book}{
      author={Coifman, Ronald~R.},
      author={Weiss, Guido},
       title={Transference methods in analysis},
   publisher={American Mathematical Society},
     address={Providence, R.I.},
        date={1976},
        ISBN={0-8218-1681-0},
        note={Conference Board of the Mathematical Sciences Regional Conference
  Series in Mathematics, No. 31},
      review={\MR{0481928 (58 \#2019)}},
}

\bib{MR0174937}{article}{
      author={de~Leeuw, Karel},
       title={On {$L_{p}$} multipliers},
        date={1965},
        ISSN={0003-486X},
     journal={Ann. of Math. (2)},
      volume={81},
       pages={364\ndash 379},
      review={\MR{0174937 (30 \#5127)}},
}

\bib{MR2301463}{article}{
      author={Diestel, Geoff},
      author={Grafakos, Loukas},
       title={Unboundedness of the ball bilinear multiplier operator},
        date={2007},
        ISSN={0027-7630},
     journal={Nagoya Math. J.},
      volume={185},
       pages={151\ndash 159},
      review={\MR{MR2301463 (2007k:42026)}},
}

\bib{MR0618663}{book}{
      author={Edwards, R.~E.},
      author={Gaudry, G.~I.},
       title={Littlewood-{P}aley and multiplier theory},
   publisher={Springer-Verlag},
     address={Berlin},
        date={1977},
        ISBN={3-540-07726-X},
        note={Ergebnisse der Mathematik und ihrer Grenzgebiete, Band 90},
      review={\MR{0618663 (58 \#29760)}},
}

\bib{MR1808390}{article}{
      author={Fan, Dashan},
      author={Sato, Shuichi},
       title={Transference on certain multilinear multiplier operators},
        date={2001},
        ISSN={1446-7887},
     journal={J. Aust. Math. Soc.},
      volume={70},
      number={1},
       pages={37\ndash 55},
      review={\MR{MR1808390 (2002c:42013)}},
}

\bib{MR2595656}{article}{
      author={Grafakos, Loukas},
      author={Soria, Javier},
       title={Translation-invariant bilinear operators with positive kernels},
        date={2010},
        ISSN={0378-620X},
     journal={Integral Equations Operator Theory},
      volume={66},
      number={2},
       pages={253\ndash 264},
         url={http://dx.doi.org/10.1007/s00020-010-1746-2},
      review={\MR{2595656 (2011c:42045)}},
}

\bib{MR1880324}{article}{
      author={Grafakos, Loukas},
      author={Torres, Rodolfo~H.},
       title={Multilinear {C}alder{\'o}n-{Z}ygmund theory},
        date={2002},
        ISSN={0001-8708},
     journal={Adv. Math.},
      volume={165},
      number={1},
       pages={124\ndash 164},
      review={\MR{MR1880324 (2002j:42029)}},
}

\bib{MR1398100}{article}{
      author={Grafakos, Loukas},
      author={Weiss, Guido},
       title={Transference of multilinear operators},
        date={1996},
        ISSN={0019-2082},
     journal={Illinois J. Math.},
      volume={40},
      number={2},
       pages={344\ndash 351},
         url={http://projecteuclid.org/getRecord?id=euclid.ijm/1255986110},
      review={\MR{1398100 (97k:43010)}},
}

\bib{MR551496}{book}{
      author={Hewitt, Edwin},
      author={Ross, Kenneth~A.},
       title={Abstract harmonic analysis. {V}ol. {I}},
     edition={Second},
      series={Grundlehren der Mathematischen Wissenschaften [Fundamental
  Principles of Mathematical Sciences]},
   publisher={Springer-Verlag},
     address={Berlin},
        date={1979},
      volume={115},
        ISBN={3-540-09434-2},
        note={Structure of topological groups, integration theory, group
  representations},
      review={\MR{MR551496 (81k:43001)}},
}

\bib{MR0121655}{article}{
      author={H{\"o}rmander, Lars},
       title={Estimates for translation invariant operators in {$L^{p}$}\
  spaces},
        date={1960},
        ISSN={0001-5962},
     journal={Acta Math.},
      volume={104},
       pages={93\ndash 140},
      review={\MR{0121655 (22 \#12389)}},
}

\bib{MR1689336}{article}{
      author={Lacey, Michael},
      author={Thiele, Christoph},
       title={On {C}alder\'on's conjecture},
        date={1999},
        ISSN={0003-486X},
     journal={Ann. of Math. (2)},
      volume={149},
      number={2},
       pages={475\ndash 496},
         url={http://dx.doi.org/10.2307/120971},
      review={\MR{1689336 (2000d:42003)}},
}

\bib{MR540367}{book}{
      author={Lindenstrauss, Joram},
      author={Tzafriri, Lior},
       title={Classical {B}anach spaces. {II}},
      series={Ergebnisse der Mathematik und ihrer Grenzgebiete [Results in
  Mathematics and Related Areas]},
   publisher={Springer-Verlag},
     address={Berlin},
        date={1979},
      volume={97},
        ISBN={3-540-08888-1},
        note={Function spaces},
      review={\MR{540367 (81c:46001)}},
}

\bib{MR0275057}{article}{
      author={Saeki, Sadahiro},
       title={Translation invariant operators on groups},
        date={1970},
        ISSN={0040-8735},
     journal={T\^ohoku Math. J. (2)},
      volume={22},
       pages={409\ndash 419},
      review={\MR{0275057 (43 \#815)}},
}

\bib{MR0290095}{book}{
      author={Stein, Elias~M.},
       title={Singular integrals and differentiability properties of
  functions},
      series={Princeton Mathematical Series, No. 30},
   publisher={Princeton University Press},
     address={Princeton, N.J.},
        date={1970},
      review={\MR{MR0290095 (44 \#7280)}},
}

\bib{MR2471164}{article}{
      author={Villarroya, Francisco},
       title={Bilinear multipliers on {L}orentz spaces},
        date={2008},
        ISSN={0011-4642},
     journal={Czechoslovak Math. J.},
      volume={58(133)},
      number={4},
       pages={1045\ndash 1057},
         url={http://dx.doi.org/10.1007/s10587-008-0067-y},
      review={\MR{2471164 (2009k:42022)}},
}

\end{biblist}
\end{bibdiv}

\end{document}